\newcommand{\tr}{\mathrm{Trace}}
\newcommand{\F}{\mathcal{F}}
\newcommand{\RR}{\mathbb{R}}
\newcommand{\ZZ}{\mathbb{Z}}
\newcommand{\WW}{\mathbb{W}}
\newcommand{\eqdef}{\overset{{\mbox{\tiny  def}}}{=}}
\newcommand{\opnm}[3]{\|#1\|_{{\cal L}({\cal H}^{#2},{\cal H}^{#3})}}
\newcommand{\bra}[1]{\langle #1 \rangle}
\newcommand{\be}{\begin{equs}}
\newcommand{\ee}{\end{equs}}
\newcommand{\indic}{1{\hskip -2.5 pt}\hbox{I}}
\newcommand{\what}{\widehat}
\newcommand{\Normal}{\textrm{N}}
\newcommand{\longweak}{\Longrightarrow}
\newcommand{\dist}{\overset{\mathcal{D}}{\sim}}
\newcommand{\iid}{\text {i.i.d.}}
\newcommand{\EE}{\mathbb{E}}
\newcommand{\PP}{\mathbb{P}}
\newcommand\eps{\varepsilon}
\renewcommand\epsilon{\varepsilon}
\renewcommand\phi{\varphi}
\newcommand{\pphi}{\hat{\phi}}
\newcommand{\h}{\mathcal{H}}
\newcommand{\err}{\textbf{e}}
\newcommand{\lip}{\textrm{lip}}
\newtheorem{thm}[theorem]{Theorem}
\newtheorem{lem}[theorem]{Lemma}
\newtheorem{prop}[theorem]{Proposition}
\newtheorem{assumptions}[theorem]{Assumptions}
\newtheorem{conditions}[theorem]{Conditions}
\begin{document}

\title{Noisy Gradient Flow from a Random Walk in Hilbert Space
}


\author{
Natesh S. Pillai
\and
Andrew M. Stuart
\and
Alexandre H. Thi{\'e}ry
}


\institute{
Natesh S. Pillai \at
Department of Statistics,\\
Harvard University\\
1 Oxford Street, Cambridge\\ 02138, MA, USA\\
\email{pillai@stat.harvard.edu}           
\and
Andrew M. Stuart \at
Mathematics Institute\\
Warwick University\\ CV4 7AL, UK\\
\email{a.m.stuart@warwick.ac.uk}
\and
Alexandre H. Thi{\'e}ry\\
Department of Statistics\\
Warwick University\\ CV4 7AL, UK\\
\email{a.h.thiery@warwick.ac.uk}
}

\date{Received: date / Accepted: date}

\maketitle

\begin{abstract}
Consider a probability measure on a Hilbert space defined via
its  density with respect to a Gaussian.
The purpose of this paper is to demonstrate that an appropriately
defined Markov chain, which is reversible with respect to the
measure in question, exhibits a diffusion limit to a noisy
gradient flow, also reversible with respect to the same measure.
The Markov chain is defined by applying
a Metropolis-Hastings accept-reject mechanism \cite{Tie} to an
Ornstein-Uhlenbeck (OU) proposal which is itself reversible with respect to
the underlying Gaussian measure. The resulting noisy gradient flow
is a stochastic partial differential equation driven by a Wiener
process with spatial correlation given by the underlying Gaussian structure.

There are two primary motivations for this work. The first
concerns insight into Monte Carlo Markov Chain (MCMC) methods for
sampling of measures on a Hilbert space defined via a density
with respect to a Gaussian measure. These measures must be approximated on
finite dimensional spaces of dimension $N$ in order to be sampled.
A conclusion of the work herein is that MCMC methods based on
prior-reversible OU proposals will explore the target measure
in ${\mathcal O}(1)$ steps with respect to dimension $N$. This
is to be contrasted with standard MCMC methods based on the
random walk or Langevin proposals which require ${\mathcal O}(N)$ 
and ${\mathcal O}(N^{1/3})$ steps respectively 
\cite{mattingly2011spde,PST12}. The second motivation relates
to optimization.  There are many applications where it is of interest to
find global or local minima of a functional defined on 
an infinite dimensional Hilbert space. 
Gradient flow or steepest descent
is a natural approach to this problem, but in its 
basic form requires computation of a gradient which, 
in some applications, may be an expensive
or complex task. This paper
shows that a stochastic gradient descent described 
by a stochastic partial differential equation can emerge from certain
carefully specified Markov chains. This idea is well-known in the finite state 
\cite{kirkpatrick1983optimization,cerny1985thermodynamical}
or finite dimensional context \cite{geman1985bayesian,geman1986diffusions,chiang1987diffusion,holley1989asymptotics}.  
The novelty of the work in this paper is that the emergence of
the noisy gradient flow is developed on an infinite dimensional Hilbert space.
In the context of global optimization, when the noise level is also adjusted
as part of the algorithm, methods of the type studied here go
by the name of simulated--annealing; see the review 
\cite{bertsimas1993simulated} for further references. 
Although we do not consider adjusting the noise-level as part of the
algorithm, the noise strength is a tuneable parameter in our construction
and the methods developed here could potentially be used to study
simulated annealing in a Hilbert space setting.

The transferable idea behind this work is that conceiving
of algorithms directly in the infinite dimensional setting 
leads to methods which are robust to finite dimensional
approximation.  We emphasize that discretizing, and then
applying standard finite dimensional techniques
in $\RR^N$, to either sample or optimize, can lead to algorithms
which degenerate as the dimension $N$ increases.

\keywords{Optimisation \and Simulated annealing 
\and Markov Chain Monte Carlo \and Diffusion approximation}
 \subclass{60-08 \and 60H15 \and 60J25}
\end{abstract}

\section{Introduction}
\label{sec:1}

There are many applications where it is of interest to
find global or local minima of a functional
\begin{equation}
\label{eq:I}
J(x)=\frac 12 \|C^{-1/2}x\|^2+\Psi(x)
\end{equation}
where $C$ is a self-adjoint, positive
and trace-class linear operator
on a Hilbert space $\Bigl(\h,\langle \cdot,\cdot\rangle,
\|\cdot\|\Bigr).$ Gradient flow or steepest descent
is a natural approach to this problem, but in its 
basic form requires computation of the gradient
of $\Psi$ which, in some applications, may be an expensive
or complex task. The purpose of this paper
is to show how a stochastic gradient descent described by a stochastic partial differential equation can emerge from certain
carefully specified random walks, when combined with
a Metropolis-Hastings accept-reject mechanism \cite{Tie}.
In the finite state 
\cite{kirkpatrick1983optimization,cerny1985thermodynamical}
or finite dimensional context \cite{geman1985bayesian,geman1986diffusions,chiang1987diffusion,holley1989asymptotics}  
this is a well-known idea, which goes by the name 
of simulated-annealing; see the review 
\cite{bertsimas1993simulated} for further references. 
The novelty of the work in this paper is that the theory 
is developed on an infinite dimensional Hilbert space,
leading to an algorithm which is robust to finite dimensional
approximation: we adopt the ``optimize then discretize''
viewpoint (see \cite{hinze2008optimization}, Chapter 3). 
We emphasize that discretizing, and then applying standard
finite dimensional techniques in $\RR^N$ to optimize, can lead to algorithms
which degenerate as $N$ increases; the diffusion limit
proved in \cite{mattingly2011spde} provides a concrete example
of this phenomenon for the standard random walk algorithm.

The algorithms we construct have two basic building
blocks: (i) drawing samples from the centred Gaussian measure
$N(0,C)$ and (ii) evaluating $\Psi$. By judiciously combining these
ingredients we generate (approximately) a noisy
gradient flow for $J$ with tunable temperature parameter
controlling the size of the noise.
In finite dimensions the basic idea 
is built from Metropolis-Hastings methods which have an
invariant measure with Lebesgue density proportional 
to $\exp\bigl(-\tau^{-1}J(x)\bigr)$.
The essential challenge in transferring these finite-dimensional algorithms to an infinite-dimensional setting is that there is no Lebesgue measure. This issue can be circumvented
by working with measures defined via their density with 
respect to a Gaussian measure, and 
for us the natural Gaussian measure  on $\h$
is
\begin{equs}
\label{eq:defpi}
\pi^{\tau}_0 = \Normal(0, \tau \, C).
\end{equs}
The quadratic form $\|C^{-\frac12}x\|^2$ is the square of the Cameron-Martin
norm corresponding to the Gaussian measure $\pi^{\tau}_0.$
Given $\pi^{\tau}_0$ we may then
define the (in general non-Gaussian) measure
$\pi^{\tau}$
via its Radon-Nikodym derivative with respect to $\pi^{\tau}$:
\begin{equs} \label{eqn:targmeas}
\frac{d\pi^{\tau}}{d\pi^{\tau}_0}(x) \propto \exp 
\Bigl( -\frac{\Psi(x)}{\tau}\Bigr).
\end{equs}
We assume that $\exp\bigl(-\tau^{-1}\Psi(\cdot)\bigr)$ is in $L^1_{\pi_0^{\tau}}$.
Note that if $\h$ is finite dimensional then $\pi^{\tau}$
has Lebesgue density proportional 
to $\exp\bigl(-\tau^{-1}J(x)\bigr).$

Our basic strategy will be to construct a Markov chain
which is $\pi^{\tau}$-invariant and to show that
a piecewise linear interpolant of the Markov chain
converges weakly (in the sense of probability measures)
to the desired noisy gradient flow in an appropriate
parameter limit.
To motivate the Markov chain we first observe
that the linear SDE in $\h$ given by
\begin{equs} \label{eqn:limit:SPDEL}
    dz   &= -z  \, dt + \sqrt{2\tau} dW\\
    z_0 &= x, 
\end{equs}
where $W$ is a Brownian motion in $\h$ 
with covariance operator equal to $C$, is reversible 
and ergodic with respect to $\pi_0^{\tau}$ 
given by \eqref{eq:defpi} \cite{da1996ergodicity}. 
If $t>0$ then the exact solution of this equation
has the form, for $\delta=\frac12(1-e^{-2t})$,
\begin{equs}
z(t)&=e^{-t}x+\sqrt{\Bigl(\tau(1-e^{-2t})\Bigr)}\xi\\
&=\bigl(1-2\delta\bigr)^{\frac12}x+\sqrt{2\delta\tau}\xi,
\label{eq:prop}
\end{equs}
where $\xi$ is a Gaussian random variable drawn from $N(0,C).$
Given a current state $x$ of our Markov chain we will propose
to move to $z(t)$ given by this formula, for some choice
of $t>0$. We will then accept or reject this proposed move
with probability found from pointwise evaluation of $\Psi$, 
resulting in a Markov chain $\{x^{k,\delta}\}_{k \in \ZZ^+}.$
The resulting Markov chain corresponds to the preconditioned Crank-Nicolson, 
or pCN,  method, also refered to as the PIA method with
$(\alpha,\theta)=(0,\frac12)$ in the paper \cite{Besk:etal:08}
where it was introduced; this is one 
of a family of Metropolis-Hastings
methods defined on the Hilbert space ${\cal H}$ and the
review \cite{CRSW13} provides further details.

From the output of the pCN Metropolis-Hastings
method we construct
a continuous interpolant of the Markov chain defined by 
\begin{equs}  \label{eqn:MCMCe}
z^{\delta}(t)= \frac{1}{\delta} \, (t-t_k) \, x^{k+1,\delta} 
+ \frac{1}{\delta} \, (t_{k+1}-t) \, x^{k,\delta} 
\qquad \text{for} \qquad 
t_k \leq t < t_{k+1}
\end{equs}
with $t_k \eqdef k \delta.$ 
The main result of the paper is that as $\delta \to 0$
the Hilbert-space valued function of time
$z^{\delta}$ converges weakly to $z$ solving the 
Hilbert space valued SDE, or SPDE, following the dynamics
\begin{equation} \label{eqn:limit:SPDEG}
dz   = -\Big( z + C \nabla \Psi(z) \Big) \, dt + \sqrt{2\tau} dW\\
\end{equation}
on pathspace. This equation is reversible and ergodic
with respect to the measure 
$\pi^{\tau}$ \cite{da1996ergodicity,hair:Stua:Voss:07}.
It is also known that small ball probabilities are
asymptotically maximized (in the small radius limit), 
under $\pi^{\tau}$, on balls
centred at minimizers of $J$ \cite{DLSV13}. The result
thus shows that the algorithm will generate sequences
which concentrate near minimizers of $J$.

Because the SDE \eqref{eqn:limit:SPDEG} does not possess
the smoothing property, almost sure fine scale properties under
its invariant measure $\pi^{\tau}$ are not necessarily 
reflected at any finite time. For example, if $C$ is the
covariance operator of Brownian motion or Brownian
bridge then the quadratic variation of draws from the invariant
measure, an almost sure quantity, is not reproduced at any
finite time in \eqref{eqn:limit:SPDEG} unless $z(0)$ has
this quadratic variation; the almost sure property is
approached asymptotically as $t \to \infty$. 
This behaviour is reflected in the
underlying Metropolis-Hastings Markov chain pCN 
with weak limit \eqref{eqn:limit:SPDEG}, where the
almost sure property is only reached asymptotically 
as $n \to \infty.$ In a second result
of this paper we will show that almost sure quantities 
such as the quadratic variation under pCN 
satisfy a limiting linear ODE
with globally attractive steady state given by the value
of the quantity under $\pi^{\tau}$. This gives quantitative
information about the rate at which the pCN algorithm
approaches statistical equilibrium.

We have motivated the limit theorem in this paper
through the goal of creating noisy gradient flow in
infinite dimensions with tuneable noise level,  using only 
draws from a Gaussian random variable and evaluation
of the non-quadratic part of the objective function. 
A second motivation for the work comes from understanding
the computational complexity of MCMC methods, and for this
it suffices to consider $\tau$ fixed at $1$. The paper 
\cite{mattingly2011spde} shows that discretization of
the standard Random Walk Metropolis algorithm, S-RWM, will also
have diffusion limit given by \eqref{eqn:limit:SPDEG} as the
dimension of the discretized space tends to infinity, whilst
the time increment $\delta$ in \eqref{eqn:MCMCe}, decreases
at a rate inversely proportional to $N$. The condition on
$\delta$ is a form of CFL condition, in the language of
computational PDEs, and implies that ${\cal O}(N)$ steps
will be required to sample the desired probability distribution.
In contrast the pCN method analyzed here has no CFL restriction:
$\delta$ may tend to zero independently of dimension; indeed
in this paper we work directly in the setting of infinite
dimension. The reader interested in this computational
statistics perspective on diffusion limits may also
wish to consult the paper \cite{PST12} which demonstrates that
the Metropolis adjusted Langevin algorithm, MALA, requires
a CFL condition which implies that ${\cal O}(N^{\frac13})$ steps
are required to sample the desired probability distribution. 
Furthermore, the formulation of the limit theorem that we prove
in this paper is closely related to the methodologies
introduced in \cite{mattingly2011spde} and \cite{PST12}; it should be mentioned   nevertheless that the analysis carried out in this article allows to prove a diffusion limit for a sequence of Markov chains evolving in a possibly non-stationary regime. This was not the case in \cite{mattingly2011spde} and \cite{PST12}.

We prove in Theorem \ref{thm:main} that for a fixed temperature parameter $\tau>0$, as the time increment $\delta$ goes to $0$, the pCN algorithm behaves as a stochastic gradient descent.
By adapting the temperature $\tau \in (0,\infty)$ 
according to an appropriate cooling schedule it is possible
to locate global minima of $J$; standard heuristics show that the distribution $\pi^{\tau}$ concentrates on a $\tau^{1/2}$-neighbourhood around the global minima of the functional $J$. We stress though that all the proofs presented in this article assume a constant temperature. The asymptotic analysis of the effect of the cooling schedule is left for future work; the study of such Hilbert space valued simulated annealing algorithms presents several challenges, one of them being that that the probability distributions $\pi^{\tau}$ are mutually singular for different temperatures $\tau>0$.

In section \ref{sec:2} we describe some notation used
throughout the paper, discuss the required properties
of Gaussian measures and Hilbert-space valued Brownian
motions, and state our assumptions. Section \ref{sec:3}
contains a precise definition of the Markov chain 
$\{x^{k,\delta}\}_{k \geq 0}$,
together with statement and proof
of the weak convergence theorem that is the main result
of the paper. Section \ref{sec:4} contains proof
of the lemmas which underly the weak convergence
theorem. 
In section \ref{sec:5} we state and prove the limit
theorem for almost sure quantities such as quadratic
variation; such results are often termed ``fluid limits''
in the applied probability literature.
An example is presented in section \ref{sec:6}.
We conclude in section \ref{sec:7}.

\section{Preliminaries}
\label{sec:2}

In this section we define some notational conventions, 
Gaussian measure and Brownian motion in Hilbert space, and
state our assumptions concerning the operator $C$ and
the functional $\Psi.$ 

\subsection{Notation}

Let $\Bigl(\h, \bra{\cdot, \cdot}, \|\cdot\|\Bigr)$ 
denote a separable Hilbert space of real valued functions with
the canonical norm derived from the inner-product. 
Let $C$ be a positive symmetric trace class operator on $\h$ 
and $\{\phi_j,\lambda^2_j\}_{j \geq 1}$ be the eigenfunctions
and eigenvalues of $C$ respectively, so that
$C\phi_j = \lambda^2_j \,\phi_j$ 
for $j \in \mathbb{N}$.
We assume a normalization under which $\{\phi_j\}_{j \geq 1}$ 
forms a complete orthonormal basis in $\h$.
For every $x \in \h$ we have the representation
$x = \sum_{j} \; x_j \phi_j$ where $x_j=\langle x,\phi_j\rangle.$
Using this notation, we define Sobolev-like spaces $\h^r, r \in \RR$, with the
inner-products and norms defined by
\begin{equs}\label{eqn:Sob}
\langle x,y \rangle_r\eqdef\sum_{j=1}^\infty j^{2r}x_jy_j
\qquad \text{and} \qquad
\|x\|^2_r \eqdef \sum_{j=1}^\infty j^{2r} \, x_j^{2}.
\end{equs}
Notice that $\h^0 = \h$. Furthermore
$\h^r \subset \h \subset \h^{-r}$ and 
$\{j^{-r} \phi_j\}_{j \geq 1}$ is an orthonormal 
basis of $\h^r$ for any $r >0$.  
For a positive, self-adjoint operator $D : \h^r \mapsto \h^r$, its trace in
$\h^r$ is defined as
\begin{equs}
\tr_{\h^r}(D) \;\eqdef\; \sum_{j=1}^\infty \bra{ (j^{-r} \phi_j), D
(j^{-r} \phi_j) }_r.
\end{equs}
Since $\tr_{\h^r}(D)$ does not depend on the orthonormal basis $\{\phi_j\}_{j \geq 1}$,
the operator $D$ is said to be trace class in $\h^r$ if $\tr_{\h^r}(D) < \infty$
for
some, and hence any, orthonormal basis of $\h^r$.
Let $\otimes_{\h^r}$
denote the outer product operator in $\h^r$
defined by
\begin{equs}\label{eqn:outprod}
(x \otimes_{\h^r} y) z \eqdef \bra{ y, z}_r \,x 
\end{equs}
for vectors $x,y,z \in \h^r$.
For an operator $L: \h^r \mapsto \h^l$, we denote its operator norm 
by $\opnm{\cdot}{r}{l}$ defined by
$\opnm{L}{r}{l} \eqdef \sup \, \big\{ \|L x\|_l 
,: \|x\|_r=1 \big\}$.
For self-adjoint $L$ and $r=l=0$ this is, of course, the spectral radius of $L$.
Throughout we use the following notation.
\begin{itemize}
\item Two sequences $\{\alpha_n\}_{n \geq 0}$ and $\{\beta_n\}_{n \geq 0}$
satisfy $\alpha_n \lesssim \beta_n$ 
if there exists a constant $K>0$ satisfying $\alpha_n \leq K \beta_n$ for all $n
\geq 0$.
The notations $\alpha_n \asymp \beta_n$ means that $\alpha_n \lesssim \beta_n$
and $\beta_n \lesssim \alpha_n$.

\item 
Two sequences of real functions $\{f_n\}_{n \geq 0}$ and $\{g_n\}_{n \geq 0}$
defined on the same set $\Omega$
satisfy $f_n \lesssim g_n$ if there exists a constant $K>0$ satisfying $f_n(x)
\leq K g_n(x)$ for all $n \geq 0$
and all $x \in \Omega$.
The notations $f_n \asymp g_n$ means that $f_n \lesssim g_n$ and $g_n \lesssim
f_n$.

\item
The notation $\EE_x \big[ f(x,\xi) \big]$ denotes expectation 
with variable $x$ fixed, while the randomness present in $\xi$
is averaged out.

\item We use the notation $a \wedge b$ instead of $\min(a,b)$.
\end{itemize}

\subsection{Gaussian Measure on Hilbert Space}
The following facts concerning Gaussian measures on
Hilbert space, and Brownian motion in Hilbert space,
may be found in \cite{Dapr:Zaby:92}.
Since $C$ is self-adjoint, positive and trace-class
we may associate with it a centred Gaussian measure
$\pi_0$ on $\h$ with covariance operator $C$, 
\textit{i.e.,} $\pi_0 \eqdef \Normal(0,C)$. If $x \dist \pi_0$ then we may write
its Karhunen-Lo\'eve expansion,
\begin{equs}
x = \sum_{j=1}^\infty \lambda_j\,\rho_j \,\phi_j, 
\label{eqn:KLexp}
\end{equs}
with $\{\rho_j\}_{j \geq 1}$ an i.i.d sequence 
of standard centered Gaussian random variables; 
since $C$ is trace-class,
the above sum converges in $L^2$. Notice that 
for any value of $r \in \RR$ we have 
$\EE \|X\|_r^2 = \sum_{j \geq 1} j^{2r} \bra{X,\phi_j}^2 = \sum_{j \geq 1} j^{2r} \lambda_j^2$ 
for $X \dist \pi_0$. For values of $r \in \RR$ such that $\EE \|X\|_r^2 < \infty$ we indeed have 
$\pi_0\big( \h^r \big)=1$ and the random variable $X$ can 
also be described as a Gaussian random variable in $\h^r$. One can readily check that in this case the covariance operator $C_r:\h^r \to \h^r$ of $X$ when viewed as a $\h^r$-valued random variable is given by
\begin{equs}\label{eqn:covopcr}
C_r = B^{1/2}_r \,C \,B^{1/2}_r.
\end{equs}
where 
$B_r : \h \mapsto \h$ denote the operator which is
diagonal in the basis $\{\phi_j\}_{j \geq 1}$ with diagonal entries
$j^{2r}$. In other words,
$B_r \,\phi_j = j^{2r} \phi_j$
so that $B^{\frac12}_r \,\phi_j = j^r \phi_j$ 
and $\EE\big[ \bra{X,u}_r \bra{X,v}_r \big] = \bra{u,C_r v}_r$ for $u,v \in \h^r$ and $X \dist \pi_0$. 
The condition $\EE \|X\|_r^2 < \infty$ can  equivalently be stated as
\begin{equs}
\tr_{\h^r}(C_r) < \infty.
\end{equs}
This shows that even though the Gaussian measure $\pi_0$ is 
defined on $\h$, depending on the decay of the eigenvalues of $C$, there exists
an entire range of values of $r$ such that 
$\EE\|X\|_r^2 = \tr_{\h^r}(C_r) < \infty$ and 
in that case the measure $\pi_0$ has full support on $\h^r$.

Frequently in applications the functional $\Psi$ arising
in \eqref{eq:I} may not
be defined on all of $\h$, but only on a subspace $\h^s \subset \h$,
for some exponent $s>0$. 
From now onwards we fix a distinguished exponent $s>0$ and assume that
$\Psi:\h^s \to \RR$
and that $\tr_{\h^s}(C_s) < \infty$ so that 
$\pi(\h^s)=\pi_0^{\tau}(\h^s)=\pi^{\tau}(\h^s)=1$; the change of measure formula \eqref{eqn:targmeas} is well defined.
For ease of notations we introduce
\begin{equs}
\hat{\phi}_j = B_s^{-\frac{1}{2}} \phi_j  = j^{-s} \, \phi_j
\end{equs}
so that the family $\{\pphi_j\}_{j \geq 1}$
forms an orthonormal basis for
$\big(\h^s, \bra{\cdot,\cdot}_s\big)$.
We may view the Gaussian measure $\pi_0 = \Normal(0,C)$ on $\big(\h,
\bra{\cdot,\cdot}\big)$ 
as a Gaussian measure $\Normal(0,C_s)$ on $\big(\h^s, \bra{\cdot,\cdot}_s\big)$.

A Brownian motion $\{W(t)\}_{t \geq 0}$ in $\h^s$ with covariance operator
$C_s:\h^s \to \h^s$ is a continuous Gaussian
process with stationary increments satisfying $\EE\big[ 
\bra{W(t),x}_s \bra{W(t),y}_s \big] = t \bra{x,C_s y}_s$.
For example, taking $\{\beta_j(t)\}_{j \geq 1}$ independent standard real
Brownian motions,
the process 
\begin{equs} \label{e.brow.expan}
W(t) = \sum_j (j^s \lambda_j) \, \beta_j(t) \pphi_j 
\end{equs}
defines a Brownian motion in $\h^s$ with covariance operator $C_s$;
equivalently, this same process $\{W(t)\}_{t \geq 0}$ 
can be described as a Brownian motion in $\h$ with covariance operator equal to
$C$ since Equation \eqref{e.brow.expan}
may also be expressed as $W(t) = \sum_{j=1}^{\infty} \lambda_j \beta_j(t)
\phi_j$.

\subsection{Assumptions} \label{sec:assumptions}
In this section we describe the assumptions on the covariance 
operator $C$ of the Gaussian measure $\pi_0 =\Normal(0,C)$ and the functional
$\Psi$, and the connections between them.
Roughly speaking we will assume that the second-derivative
of $\Psi$ is globally bounded as an operator acting between two spaces
which arise naturally from understanding the domain of the function $\Psi$;
furthermore the domain of $\Psi$ must be a set of full measure with
respect to the underlying Gaussian. If the eigenvalues of $C$ decay
like $j^{-2\kappa}$ and $\kappa>\frac12$ then $\pi_0^{\tau}(\h^s)=1$ for 
all $s<\kappa-\frac12$ and so we will assume eigenvalue decay of this
form and assume the domain of $\Psi$ is defined appropriately. We
now formalize these ideas.

For each $x \in \h^s$ the derivative $\nabla \Psi(x)$
is an element of the dual $(\h^s)^*$ of $\h^s$, comprising 
the linear functionals on $\h^s$.
However, we may identify $(\h^s)^* = \h^{-s}$ and view $\nabla \Psi(x)$
as an element of $\h^{-s}$ for each $x \in \h^s$. With this identification,
the following identity holds
\begin{equs}
\| \nabla \Psi(x)\|_{\mathcal{L}(\h^s,\RR)} = \| \nabla \Psi(x) \|_{-s} 
\end{equs}
and the second derivative $\partial^2 \Psi(x)$ can be identified with an
element 
of $\mathcal{L}(\h^s, \h^{-s})$.
To avoid technicalities we assume that $\Psi(x)$ is quadratically bounded, 
with first derivative linearly bounded and second derivative globally 
bounded. Weaker, localized assumptions 
could be dealt with by use of stopping time arguments. 
\begin{assumptions} \label{ass:1}
The functional $\Psi$ and the covariance operator $C$ satisfy the following
assumptions.
\begin{enumerate}
\item[A1.] {\bf Decay of Eigenvalues $\lambda_j^2$ of $C$:}
there exists a constant $\kappa > \frac{1}{2}$ such that
\begin{equs} \label{eqn:decayeigen}
\lambda_j \asymp j^{-\kappa}.
\end{equs}

\item[A2.] {\bf Domain of $\Psi$:}
there exists an exponent $s \in [0, \kappa - 1/2)$ such $\Psi$ is defined on
$\h^s$.

\item[A3.] {\bf Size of $\Psi$:} 
the functional $\Psi:\h^s \to \RR$ satisfies the growth conditions
\begin{equs}
0 \quad\leq\quad \Psi(x) \quad\lesssim\quad 1 +  \|x\|_s^2 . 
\end{equs}

\item[A4.] {\bf Derivatives of $\Psi$:} 
The derivatives of $\Psi$ satisfy
\begin{equs}
\| \nabla \Psi(x)\|_{-s} \quad\lesssim\quad 1 + \|x\|_s  
\qquad \text{and} \qquad
\opnm{\partial^2 \Psi(x)}{s}{-s} \quad\lesssim\quad 1.
\end{equs}
\end{enumerate}
\end{assumptions}

\begin{remark}
The condition $\kappa > \frac{1}{2}$ ensures that $\tr_{\h^r}(C_r)  < \infty$
for any $r < \kappa - \frac{1}{2}$: this implies that
$\pi^{\tau}_0(\h^r)=1$ 
for any $\tau > 0$ and  $r < \kappa - \frac{1}{2}$.
\label{rem:one}
\end{remark}

\begin{remark}
The functional $\Psi(x)  = \frac{1}{2}\|x\|_s^2$ is defined on $\h^s$ and satisfies Assumptions \ref{ass:1}. Its 
derivative at $x \in \h^s$
is given by $\nabla \Psi(x) = \sum_{j \geq 0} j^{2s} x_j \phi_j \in \h^{-s}$
with  
$\|\nabla \Psi(x)\|_{-s} = \|x\|_s$. The second derivative $\partial^2 \Psi(x)
\in \mathcal{L}(\h^s, \h^{-s})$ 
is the linear operator that maps $u \in \h^s$ to $\sum_{j \geq 0} j^{2s}
\bra{u,\phi_j} \phi_j \in \h^{-s}$:
its norm satisfies $\| \partial^2 \Psi(x) \|_{\mathcal{L}(\h^s, \h^{-s})} = 1$
for any $x \in \h^s$.
\end{remark}

\noindent
The Assumptions \ref{ass:1} ensure that the functional $\Psi$ behaves 
well in a sense made precise in the following lemma.

\begin{lem} \label{lem:lipshitz+taylor}
Let Assumptions \ref{ass:1} hold.
\begin{enumerate}
\item 
The function $d(x) \eqdef -\Big( x + C \nabla \Psi(x) \Big)$ is globally
Lipschitz on $\h^s$:
\begin{equs} \label{e.lipshitz}
\|d(x) - d(y)\|_s \;\lesssim \; \|x-y\|_s
\qquad \qquad \forall x,y \in \h^s.
\end{equs}

\item
The second order remainder term in the Taylor expansion of $\Psi$ satisfies
\begin{equs} \label{e.taylor.order2}
\big| \Psi(y)-\Psi(x) - \bra{\nabla \Psi(x), y-x} \big| \lesssim \|y-x\|_s^2
\qquad \qquad \forall x,y \in \h^s.
\end{equs}
\end{enumerate}
\end{lem}
\begin{proof}
See \cite{mattingly2011spde}.
\end{proof}

In order to provide a clean exposition, which highlights the
central theoretical ideas, we have chosen to make {\em global}
assumptions  on $\Psi$ and its derivatives. We believe that our
limit theorems could be extended to localized version of these
assumptions, at the cost of considerable technical complications
in the proofs, by means of stopping-time arguments.
The numerical example presented in section \ref{sec:6} corroborates this
assertion.  There are many applications which satisfy local versions 
of the assumptions given, including the Bayesian formulation of
inverse problems \cite{stuart2010inverse} and conditioned
diffusions \cite{Hair:Stua:Voss:10}.

\section{Diffusion Limit Theorem}
\label{sec:3}

This section contains a precise statement of the
algorithm, statement of the main theorem
showing that piecewise linear interpolant of the
output of the algorithm converges weakly to a noisy
gradient flow described by a SPDE, and proof of the main theorem. The proofs of
various technical lemmas are deferred to section \ref{sec:4}.

\subsection{pCN Algorithm}
\label{sec:algorithm}
We now define the Markov chain in $\h^s$ which
is reversible with respect to the measure 
$\pi^{\tau}$ given by Equation \eqref{eqn:targmeas}.
Let $x \in \h^s$ be the current position of the Markov chain.
The proposal candidate $y$ is given by \eqref{eq:prop}, so that
\begin{equs} \label{eqn:RWMprop}
y = \big(1 - 2\delta \big)^{\frac{1}{2}} x +  \sqrt{2\delta \tau} \,\xi 
\qquad\text{where} \qquad 
\xi \dist \Normal(0,C)
\end{equs}
and $\delta \in (0, \frac12)$ is a small parameter which we will send to
zero in order to obtain the noisy gradient flow.
In Equation \eqref{eqn:RWMprop}, 
the random variable $\xi$ is chosen \emph{independent} of $x$.
As described in \cite{Besk:etal:08}
(see also \cite{cottervariational,stuart2010inverse}), 
at temperature $\tau \in (0,\infty)$ 
the Metropolis-Hastings acceptance probability for
the proposal $y$ is given by
\begin{equs}\label{eqn:accprob}
\alpha^{\delta}(x,\xi) = 1 \wedge \exp\Bigl(-\frac{1}{\tau}\bigl(\Psi(y) -
\Psi(x)\bigr)  \Bigr).
\end{equs}
For future use, we define the local mean acceptance probability 
at the current position $x$ via the formula 
\begin{equs} \label{e.mean.loc.acc}
\alpha^{\delta}(x) \;=\; \EE_x\big[ \alpha^{\delta}(x,\xi) \big].
\end{equs}
The chain is then reversible with respect to $\pi^{\tau}$.
The Markov chain $x^{\delta} = \{x^{k,\delta}\}_{k \geq 0}$ 
can be written as
\begin{equs} \label{eqn:prop:bern}
\begin{aligned}
x^{k+1, \delta}
&=
\gamma^{k,\delta} y^{k,\delta} + (1- \gamma^{k,\delta})\,
x^{k, \delta}\\
y^{k,\delta}  
&=
\big( 1-2\delta \big)^{\frac12} \, x^{k,\delta} + \sqrt{2\delta
\tau} \xi^k 
\end{aligned}
\end{equs}
In the above equation, the $\xi^k$ are i.i.d Gaussian random variables $\Normal(0,C)$ and the
$\gamma^{k,\delta}$
are Bernoulli random variables which account for the accept-reject mechanism 
of the Metropolis-Hastings algorithm,
\begin{equs} \label{eqn:mark:dynamic}
\gamma^{k,\delta} 
\eqdef \gamma^{\delta}(x^{k,\delta},\xi^k, U^k) 
=
\indic_{\{ U_k < \alpha^{\delta}(x^{k,\delta},\xi^k)\}}
\;\dist\;
\textrm{Bernoulli}\Big(\alpha^{\delta}(x^{k,\delta},\xi^k)\Big).
\end{equs}
for an i.i.d sequence $\{U^k\}_{k \geq 0}$ of random variables uniformly distributed on the interval $(0,1)$ and independent from all the other sources of randomness.
The next lemma will be repeatedly used in the sequel. It states 
that the size of the jump $y-x$ is of order $\sqrt{\delta}$.

\begin{lem} \label{lem:size:prop}
Under Assumptions \ref{ass:1} and for any integer $p\geq 1$ the following
inequality 
\begin{equs}
\EE_x \big[ \|y-x\|^p_s \big]^{\frac{1}{p}} 
\quad \lesssim \quad \delta \, \|x\|_s + \sqrt{\delta}
\quad \lesssim \quad \sqrt{\delta}\; \big( 1 + \|x\|_s \big)
\end{equs}
holds for any $\delta \in (0, \frac12)$.
\end{lem}
\begin{proof}
The definition of the proposal \eqref{eqn:RWMprop} shows that
$ \|y-x\|^p_s \lesssim \delta^p \; \|x\|^p_s + \delta^{\frac{p}{2}} \, \EE\big[
\| \xi \|^p_s \big]$.
Fernique's theorem \cite{Dapr:Zaby:92} shows that $\xi$ has exponential moments
and therefore $\EE \big[ \|\xi\|^p_s \big] < \infty$. This gives the conclusion.
\end{proof}

\subsection{Diffusion Limit Theorem}
Fix a time horizon $T > 0$ and a temperature $\tau \in (0,\infty)$.
The piecewise linear interpolant $z^\delta$ of the Markov chain
\eqref{eqn:prop:bern} 
is defined by Equation \eqref{eqn:MCMCe}.
The following is the main result of this article.
Note that ``weakly'' refers to weak convergence of
probability measures.

\begin{thm}\label{thm:main}
Let Assumptions \ref{ass:1} hold. 
Let the Markov chain $x^{\delta}$ start at a fixed position $x_* \in \h^s$.
Then the sequence of processes $z^\delta$ converges weakly to 
$z$ in $C([0,T], \h^s)$, as $\delta \rightarrow 0$,
where $z$ solves the $\h^s$-valued stochastic differential equation
\begin{equs} \label{eqn:limit:SPDE0}
    dz   &= -\Big( z + C \nabla \Psi(z) \Big) \, dt + \sqrt{2\tau} \, dW\\
    z_0 &= x_* 
\end{equs}
and $W$ is a Brownian motion in $\h^s$ with covariance operator equal to $C_s$.
\end{thm}

\noindent
For conceptual clarity, we derive Theorem \ref{thm:main} as a consequence of the
general 
diffusion approximation Lemma \ref{lem:diff_approx}.
Consider a separable Hilbert space $\big( \h^s, \bra{\cdot, \cdot}_s \big)$ 
and a sequence of $\h^s$-valued Markov chains
$x^\delta = \{x^{k,\delta}\}_{k \geq 0}$.
The martingale-drift decomposition with time discretization $\delta$ of the
Markov chain $x^{\delta}$ reads
\begin{equs} \label{eqn:markov_chain_seq}
x^{k+1,\delta} 
&= x^{k,\delta} + \EE\big[ x^{k+1,\delta} - x^{k,\delta}\,|x^{k,\delta} \big] \\
&\qquad \qquad +
\Big( x^{k+1,\delta} - x^{k,\delta} - \EE\big[ x^{k+1,\delta} -
x^{k,\delta}\,|x^{k,\delta} \big]\Big) \\
&= x^{k,\delta} + d^{\delta}(x^{k,\delta}) \, \delta + \sqrt{2 \tau \delta} \;
\Gamma^{\delta}(x^{k,\delta}, \xi^k)
\end{equs}
where the approximate drift $d^{\delta}$ and volatility term
$\Gamma^{\delta}(x,\xi^k)$ are given by
\begin{equs} \label{eqn:approx:drift:noise}
d^{\delta}(x) &= \delta^{-1} \; \EE\big[ x^{k+1,\delta}-x^{k,\delta} \;|
x^{k,\delta}=x \big]\\
\Gamma^{\delta}(x,\xi^k) &= (2 \tau \delta)^{-1/2} \; \Big( x^{k+1,\delta} -
x^{k,\delta} 
- \EE\big[ x^{k+1,\delta} - x^{k,\delta} \;|x^{k,\delta}=x \big] \Big).
\end{equs}
In Equation \eqref{eqn:markov_chain_seq}, the conditional expectation $\EE\big[ x^{k+1,\delta} - x^{k,\delta}\,|x^{k,\delta} \big]$ is given by $\alpha^{\delta}(x^{k,\delta}, \xi^k) \times (y^{k, \delta} - x^{k,\delta})$ for a proposal $y^{k,\delta}$ and noise term $\xi^k$ as defined in Equation \eqref{eqn:markov_chain_seq}.
Notice that $\big\{ \Gamma^{k,\delta}\big\}_{k \geq 0}$, with $\Gamma^{k,\delta}
\eqdef \Gamma^{\delta}(x^{k, \delta}, \xi^k)$, is a martingale 
difference array in the sense that $M^{k,\delta} = \sum_{j=0}^k
\Gamma^{j,\delta}$ is a martingale adapted to the 
natural filtration $\F^{\delta} = \{\F^{k,\delta}\}_{k \geq 0}$ of the Markov
chain $x^{\delta}$.
The parameter $\delta$ represents a time increment. We define the piecewise
linear rescaled noise process by

\begin{equs} \label{e.WN}
W^{\delta}(t) = \sqrt{\delta} \; \sum_{j=0}^k \Gamma^{j,\delta} \;+\; \frac{t -
t_k}{\sqrt{\delta}} \; \Gamma^{k+1,\delta}
\qquad \text{for} \qquad
t_k \leq t < t_{k+1}.
\end{equs}
We now show that, as $\delta \to 0$, if the sequence of 
approximate drift functions $d^{\delta}(\cdot)$ converges in the appropriate
norm 
to a limiting drift $d(\cdot)$ and the sequence of rescaled noise process
$W^{\delta}$ converges to a Brownian motion 
then the sequence of piecewise linear interpolants 
$z^\delta$ defined by Equation \eqref{eqn:MCMCe} converges 
weakly to a diffusion process in $\h^s$. 
In order to state the general diffusion approximation Lemma
\ref{lem:diff_approx}, we introduce the following:

\begin{conditions} \label{con:diff:approx} There exists an integer $p \geq 1$ 
such that the sequence of Markov chains $x^\delta = \{x^{k,\delta}\}_{k \geq 0}$
satisfies
\begin{enumerate}
\item
{\bf Convergence of the drift:}
there exists a globally Lipschitz function $d:\h^s \to \h^s$ such that
\begin{equs} \label{e.drift.estim.cond}
\|d^\delta(x)-d(x)\|_s \;\lesssim\; \delta \cdot \big( 1+\|x\|^{p}_s \big)
\end{equs}
\item
{\bf Invariance principle:}
as $\delta$ tends to zero the sequence of processes $\{W^{\delta}\}_{\delta \in
(0,\frac12)}$ defined by Equation 
\eqref{e.WN} converges weakly in 
$C([0,T],\h^s)$ to a Brownian motion $W$ in $\h^s$
with covariance operator $C_s$.
\item
{\bf A priori bound:}
the following bound holds
\begin{equs} \label{cond.a.priori}
\sup_{\delta \in (0,\frac12)} \quad \Big\{\; \delta \cdot \EE\Big[ \sum_{k
\delta \leq T} \; \|x^{k,\delta}\|^p_s  \Big] \;\Big\} \;<\; \infty.
\end{equs}
\end{enumerate}
\end{conditions}
\begin{remark} The a-priori bound \eqref{cond.a.priori} can equivalently be
stated as   
\begin{equs}
\sup_{\delta \in (0,\frac12)} \; \Big\{ \EE\Big[ \int_0^T \|z^{\delta}(u)\|^p_s
\, du \Big] \Big\} < \infty.
\end{equs}
\end{remark}

\noindent
It is now proved that Conditions \ref{con:diff:approx} are sufficient to obtain
a diffusion approximation 
for the sequence of rescaled processes $z^{\delta}$ defined by
equation \eqref{eqn:MCMCe}, as $\delta$ tends to zero.
Contrary to more classical diffusion approximation for Markov processes results \cite{stroock1979multidimensional,Ethi:Kurt:86} based on infinitesimal generators, the next Lemma exploits specific structures which arise when the limiting
process has additive noise and, in particular, is based on 
exploiting preservation of weak convergence under continuous mappings,
together with an explicit construction of the noise process. This idea has 
previously appeared in the literature in, for example, the 
articles \cite{mattingly2011spde,PST12} in the context of MCMC
and the article \cite{kupferman2002long}, and the references therein, 
in the context of  the derivation of SDEs from ODEs with random data.

\begin{lem} \label{lem:diff_approx}
{\bf (General Diffusion Approximation for Markov chains)}\\
Consider a separable Hilbert space $\big( \h^s, \bra{\cdot, \cdot}_s \big)$ 
and a sequence of $\h^s$-valued Markov chains
$x^\delta = \{x^{k,\delta}\}_{k \geq 0}$
starting at a fixed position in the sense that 
$x^{0,\delta} = x_*$ for all $\delta \in (0,\frac12)$.
Suppose that the drift-martingale decompositions \eqref{eqn:markov_chain_seq} of
$x^{\delta}$ satisfy
Conditions \ref{con:diff:approx}.
Then the sequence of rescaled interpolants 
$z^\delta \in C([0,T],\h^s)$ defined by
equation \eqref{eqn:MCMCe}
converges weakly in $C([0,T],\h^s)$ 
to $z \in C([0,T],\h^s)$ given by the stochastic differential equation
\begin{equs} \label{eqn:spde:approx:lem}
    dz   &= d(z) \, dt + \sqrt{2\tau} dW\\
\end{equs}
with initial condition $z_0  = x_*$ and 
where $W$ is a Brownian motion in $\h^s$ with covariance $C_s$.
\end{lem}

\begin{proof}
For the sake of clarity, the proof of Lemma \ref{lem:diff_approx} is divided
into several steps.
\begin{itemize}

\item {\bf Integral equation representation.}\\
Notice that solutions of the $\h^s$-valued SDE \eqref{eqn:spde:approx:lem} are
nothing else than
solutions of the following integral equation,
\begin{equs} \label{e.integral.eq}
z(t) = x_* + \int_0^t \, d(z(u)) \, du + \sqrt{2 \tau} W(t) \qquad \forall t \in
(0,T),
\end{equs}
where $W$ is a Brownian motion in $\h^s$ with covariance operator
equal to $C_s$.
We thus introduce the It{\^o} map $\Theta: C([0,T],\h^s) \to C([0,T],\h^s)$ that
sends a function $W \in C([0,T],\h^s)$
to the unique solution of the integral equation 
\eqref{e.integral.eq}: solution of \eqref{eqn:spde:approx:lem} can be
represented as $\Theta(W)$ where 
$W$ is an $\h^s$-valued Brownian motion with covariance $C_s$.
As is described below, the function $\Theta$ is continuous if $C([0,T],\h^s)$
is topologized by the uniform norm $\|w\|_{C([0,T],\h^s)} \eqdef \sup\{
\|w(t)\|_{s} : t \in (0,T)\}$. 
It is crucial to notice that the rescaled process $z^{\delta}$, 
defined in Equation \eqref{eqn:MCMCe}, satisfies 
$z^{\delta} = \Theta(\widehat{W}^{\delta})$ with 
\begin{equs} \label{e.int.rep}
\widehat{W}^{\delta}(t) := W^{\delta}(t) + \frac{1}{\sqrt{2\tau}} 
\int_0^t [ d^{\delta}(\bar{z}^{\delta}(u))- d(z^{\delta}(u)) ]\,du.
\end{equs}
In Equation \eqref{e.int.rep}, the quantity $d^{\delta}$ is the approximate
drift defined 
in Equation \eqref{eqn:approx:drift:noise} and $\bar{z}^{\delta}$ is the
rescaled piecewise 
constant interpolant of $\{x^{k,\delta}\}_{k \geq 0}$ defined as
\begin{equs}  \label{e.piecewise.cst}
\bar{z}^{\delta}(t) = x^{k,\delta} 
\qquad \text{for} \qquad t_k \leq t < t_{k+1}.
\end{equs}
The proof follows from a continuous mapping argument (see below) once it is
proven
that $\widehat{W}^{\delta}$ converges weakly in $C([0,T],\h^s)$ to $W$.

\item 
{\bf The It{\^o} map $\Theta$ is continuous}\\
It can be proved that $\Theta$ is continuous as a mapping 
from $\Big(C([0,T],\h^s), \|\cdot\|_{C([0,T],\h^s)} \Big)$ to itself.
The usual Picard's iteration proof of the Cauchy-Lipschitz theorem of ODEs may
be employed: 
see \cite{mattingly2011spde}.

\item {\bf The sequence of processes $\widehat{W}^{\delta}$ converges weakly to
$W$}\\
The process  $\widehat{W}^{\delta}(t)$ is defined by
$\widehat{W}^{\delta}(t) = W^{\delta}(t) + \frac{1}{\sqrt{2\tau}} \int_0^t [
d^{\delta}(\bar{z}^{\delta}(u))- d(z^{\delta}(u)) ]\,du$ 
and Conditions \ref{con:diff:approx}  state that $W^{\delta}$ converges weakly
to $W$ in $C([0,T], \h^s)$.
Consequently, to prove that $\widehat{W}^{\delta}(t)$ converges weakly to $W$ in
$C([0,T], \h^s)$,
it suffices (Slutsky's lemma) to verify that the sequences of processes
\begin{equs} \label{e.conv.proba.0}
(\omega,t) \mapsto \int_0^t \big[ d^{\delta}(\bar{z}^{\delta}(u))-
d(z^{\delta}(u)) \big] \,du 
\end{equs}
converges to zero in probability with respect to the supremum norm in
$C([0,T],\h^s)$. 
By Markov's inequality, it is enough to check that
$\EE \big[ \int_0^T \; \| d^{\delta}(\bar{z}^{\delta}(u))-
d(z^{\delta}(u)) \|_s \; du \big]$ converges to zero as $\delta$ goes to zero.
Conditions \ref{con:diff:approx} states that there exists an integer $p \geq 1$
such that
$\|d^{\delta}(x)-d(x)\| \lesssim \delta \cdot (1+\|x\|_s^{p})$ so that for any
$t_k \leq u < t_{k+1}$
we have
\begin{equs} \label{e.bound.1}
\Big\|d^{\delta}(\bar{z}^{\delta}(u))- d(\bar{z}^{\delta}(u)) \Big\|_s 
\lesssim \delta \; \big( 1+\|\bar{z}^{\delta}(u)\|_s^{p} \big)
= \delta \; \big( 1+\|x^{k,\delta}\|_s^{p} \big).
\end{equs}
Conditions \ref{con:diff:approx} states that $d(\cdot)$ is globally Lipschitz on
$\h^s$. Therefore, Lemma \ref{lem:size:prop} shows that
\begin{equs}\label{e.bound.2}
\EE \|d(\bar{z}^{\delta}(u))- d(z^{\delta}(u))\|_s 
\;\lesssim\; \EE \|x^{k+1,\delta}-x^{k,\delta}\|_s 
\;\lesssim\; \delta^{\frac12} (1+\|x^{k,\delta}\|_s).
\end{equs}
From estimates \eqref{e.bound.1} and \eqref{e.bound.2} it follows that 
$\| d^{\delta}(\bar{z}^{\delta}(u))- d(z^{\delta}(u)) \|_s \lesssim
\delta^{\frac12} (1+\|x^{k,\delta}\|^{p}_s)$.
Consequently
\begin{equs} \label{e.final.bound}
\EE \Big[ \int_0^T \; \| d^{\delta}(\bar{z}^{\delta}(u))- d(z^{\delta}(u)) \|_s
\; du \Big]
\;\lesssim\;
\delta^{\frac32} \; \sum_{k \delta < T} \EE\Big[ 1+\|x^{k,\delta}\|^{p}_s
\Big]. 
\end{equs}
The a-priori bound of Conditions \ref{con:diff:approx} shows that this last
quantity converges to zero 
as $\delta$ converges to zero, which finishes the proof of Equation
\eqref{e.conv.proba.0}. 
This concludes the proof of $\widehat{W}^{\delta}(t) \longweak W$.

\item{\bf Continuous mapping argument.}\\
It has been proved that $\Theta$ is continuous as a mapping 
from $\Big(C([0,T],\h^s), \|\cdot\|_{C([0,T],\h^s)} \Big)$ to itself.
The solutions of the $\h^s$-valued SDE \eqref{eqn:spde:approx:lem} 
can be expressed as $\Theta(W)$ while the 
rescaled continuous interpolate $z^{\delta}$ also reads 
$z^{\delta} = \Theta(\widehat{W}^{\delta})$. 
Since $\widehat{W}^{\delta}$ converges weakly in  
$\Big(C([0,T],\h^s), \|\cdot\|_{C([0,T],\h^s)} \Big)$ to $W$ as $\delta$ tends
to zero, 
the continuous mapping
theorem ensures that $z^{\delta}$ converges weakly in  
$\Big(C([0,T],\h^s), \|\cdot\|_{C([0,T],\h^s)} \Big)$ to the 
solution $\Theta(W)$ of the $\h^s$-valued 
SDE \eqref{eqn:spde:approx:lem}. 
This ends the proof of Lemma \ref{lem:diff_approx}.

\end{itemize}
\end{proof}

\noindent
In order to establish Theorem \ref{thm:main} as a consequence of the general 
diffusion approximation Lemma \ref{lem:diff_approx}, it suffices to verify 
that if Assumptions \ref{ass:1} hold then Conditions \ref{con:diff:approx} 
are satisfied by the Markov chain 
$x^{\delta}$ defined in section \ref{sec:algorithm}. 
In section \ref{sec:drift} we prove 
the following quantitative version of the approximation the function 
$d^{\delta}(\cdot)$ by the function $d(\cdot)$ 
where $d(x) = -\Big( x + C \nabla \Psi(x)\Big)$.

\begin{lem} {\bf (Drift estimate)} \label{lem.drift.estim}\\
Let Assumptions \ref{ass:1} hold and let $p \geq 1$ be an integer. 
Then the following estimate is satisfied,
\begin{equs} \label{e.drift.approx.pos}
\|d^{\delta}(x) - d(x)\|_s^p 
\; \lesssim \;
\delta^{\frac{p}{2}} (1+\|x\|_s^{2p}). 
\end{equs}
Moreover, the approximate drift $d^{\delta}$ is linearly bounded 
in the sense that
\begin{equs} \label{e.sublinear}
\| d^{\delta}(x) \|_{s} 
\; \lesssim \; 
1+\|x\|_s.
\end{equs}
\end{lem}

\noindent
It follows from Lemma \eqref{lem.drift.estim} 
that Equation \eqref{e.drift.estim.cond} of Conditions \ref{con:diff:approx} 
is satisfied as soon as Assumptions \ref{ass:1} hold.
The invariance principle of Conditions \ref{con:diff:approx} follows from the
next lemma. It is
proved in section \ref{sec:invariance:principle}.

\begin{lem} {\bf (Invariance Principle)} \label{lem:bweakconv}\\
Let Assumptions \ref{ass:1} hold. Then the rescaled noise process 
$W^{\delta}(t)$ defined in equation \eqref{e.WN} satisfies
\begin{equs}
 W^{\delta} \quad \longweak \quad W
\end{equs}
where $\longweak$ denotes weak convergence in $C([0,T],\h^s)$, and $W$ is
a $\h^s$-valued Brownian motion with covariance operator $C_s$.
\end{lem}

\noindent
In section \ref{sec:apriori:bound} it is proved that the following a priori
bound is satisfied,
\begin{lem} \label{lem:apriori:bound} {\bf (A priori bound)}\\
Consider a fixed time horizon $T > 0$ and an integer $p \geq 1$. 
Under Assumptions \ref{ass:1} the following bound holds,
\begin{equs} \label{e.a.priori}
\sup \quad \Big\{ \delta \cdot \EE\Big[ \sum_{k \delta \leq T} \;
\|x^{k,\delta}\|^p_s  \Big] \; : \delta \in (0,\frac12) \Big\} \;<\; \infty.
\end{equs}
\end{lem}

\noindent
In conclusion, Lemmas \ref{lem.drift.estim} and \ref{lem:bweakconv} and
\ref{lem:apriori:bound} together show 
that Conditions \ref{con:diff:approx} are consequences of Assumptions
\ref{ass:1}. Therefore, under Assumptions 
\ref{ass:1}, the general diffusion approximation Lemma \ref{lem:diff_approx} can
be applied:
this concludes the proof of Theorem \ref{thm:main}.

\section{Key Estimates}\label{sec:4}
This section assembles various results
which are used in the previous section. Some of the technical proofs are
deferred to the appendix.

\subsection{Acceptance Probability Asymptotics} \label{sec:acceptance:prob}
This section describes a first order expansion of the acceptance probability.
The approximation
\begin{equs} \label{e.accept.prob.approx}
\alpha^{\delta}(x,\xi) \approx \bar{\alpha}^{\delta}(x,\xi)
\quad \text{where} \quad
\bar{\alpha}^{\delta}(x,\xi) = 1 - \sqrt{\frac{2\delta}{\tau}} \bra{\nabla
\Psi(x), \xi} \indic_{\{ \bra{\nabla \Psi(x), \xi} > 0 \}}
\end{equs}
is valid for $\delta \ll 1$. 
The quantity $\bar{\alpha}^{\delta}$ has the advantage over $\alpha^{\delta}$ 
of being very simple to analyse: explicit computations are available. 
This will be exploited in section \ref{sec:drift}.
The quality of the approximation \eqref{e.accept.prob.approx} 
is rigorously quantified in the next lemma.
\begin{lem} {\bf (Acceptance probability estimate)}
\label{lem.acc.prob.estim}\\
Let Assumptions \ref{ass:1} hold. For any integer $p \geq 1$ 
the quantity $\bar{\alpha}^{\delta}(x,\xi)$ satisfies
\begin{equs} \label{e.acc.asymp}
\EE_x\big[ |\alpha^{\delta}(x,\xi) - \bar{\alpha}^{\delta}(x,\xi)|^p]
\;\lesssim\; \delta^{p} \; (1+\|x\|_s^{2p}). 
\end{equs}
\end{lem}
\begin{proof}
See Appendix \ref{app:sec4calc}.
\end{proof}
\noindent
Recall the local mean acceptance $\alpha^{\delta}(x)$ defined in Equation \eqref{e.mean.loc.acc}.
Define the approximate local mean acceptance probability by
$\bar{\alpha}^{\delta}(x) \eqdef \EE_x[ \bar{\alpha}^{\delta}(x,\xi)]$.
One can use Lemma \ref{lem.acc.prob.estim} to approximate the local mean
acceptance probability $\alpha^{\delta}(x)$.
\begin{corollary} \label{cor.alpha.bar}
Let Assumptions \ref{ass:1} hold. 
For any integer $p \geq 1$ the following estimates hold,
\begin{equs} 
&\big| \alpha^{\delta}(x) - \bar{\alpha}^{\delta}(x) \big| \;\lesssim\; \delta
\, (1+\|x\|_s^{2}) \label{e.bound.alpha.apha.bar.1}\\
&\EE_x \Big[ \; \big|\alpha^{\delta}(x,\xi)  \;-\;  1 \big|^p \Big] \;\lesssim\;
\delta^{\frac{p}{2}} \, (1+\|x\|^p_s) \label{e.bound.alpha.apha.bar.2}
\end{equs}
\end{corollary}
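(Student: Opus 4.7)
The plan is to derive both bounds from Lemma \ref{lem.acc.prob.estim} combined with an explicit analysis of $\bar{\alpha}^{\delta}(x,\xi) - 1$ based on its simple closed form.

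For \eqref{e.bound.alpha.apha.bar.1}, I would observe that $\alpha^{\delta}(x) - \bar{\alpha}^{\delta}(x) = \EE_x[\alpha^{\delta}(x,\xi) - \bar{\alpha}^{\delta}(x,\xi)]$, so by Jensen's inequality $|\alpha^{\delta}(x) - \bar{\alpha}^{\delta}(x)| \leq \EE_x[|\alpha^{\delta}(x,\xi) - \bar{\alpha}^{\delta}(x,\xi)|]$. Applying Lemma \ref{lem.acc.prob.estim} with $p=1$ immediately yields the desired bound $\delta(1+\|x\|_s^{2})$.

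For \eqref{e.bound.alpha.apha.bar.2}, I would use the triangle inequality
\begin{equs}
|\alpha^{\delta}(x,\xi) - 1|^p \lesssim |\alpha^{\delta}(x,\xi) - \bar{\alpha}^{\delta}(x,\xi)|^p + |\bar{\alpha}^{\delta}(x,\xi) - 1|^p
\end{equs}
and handle the two terms separately. The first is controlled by Lemma \ref{lem.acc.prob.estim}, giving a contribution $\delta^{p}(1+\|x\|_s^{2p})$. For the second, the explicit definition of $\bar{\alpha}^{\delta}$ gives $|\bar{\alpha}^{\delta}(x,\xi) - 1| \leq \sqrt{2\delta/\tau}\, |\bra{\nabla \Psi(x), \xi}|$; the duality bound $|\bra{\nabla \Psi(x), \xi}| \leq \|\nabla \Psi(x)\|_{-s}\|\xi\|_s$, Assumption A4, and Fernique's theorem (ensuring $\EE\|\xi\|_s^p < \infty$) then yield $\EE_x[|\bar{\alpha}^{\delta}(x,\xi) - 1|^p] \lesssim \delta^{p/2}(1+\|x\|_s)^p \lesssim \delta^{p/2}(1+\|x\|_s^p)$.

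The main technical subtlety is reconciling the exponent $2p$ appearing in the first term with the target exponent $p$ on $\|x\|_s$. I would resolve this using the trivial bound $|\alpha^{\delta}(x,\xi) - 1|^p \leq 1$ (since $\alpha^{\delta} \in [0,1]$): if $\delta^{p/2}(1+\|x\|_s^p) \geq 1$ the conclusion holds automatically; otherwise $\delta \|x\|_s^2 < 1$, in which case $\delta^{p}\|x\|_s^{2p} = (\delta\|x\|_s^2)^{p/2} \cdot \delta^{p/2}\|x\|_s^p \leq \delta^{p/2}\|x\|_s^p$ and $\delta^p \leq \delta^{p/2}$, so $\delta^{p}(1+\|x\|_s^{2p}) \lesssim \delta^{p/2}(1+\|x\|_s^p)$. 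This case split is elementary but is the step where the structure $\alpha^{\delta} \in [0,1]$ plays an essential role.
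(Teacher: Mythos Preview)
Your proof is correct. For \eqref{e.bound.alpha.apha.bar.1} you do exactly what the paper does: Jensen plus Lemma~\ref{lem.acc.prob.estim} with $p=1$.

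For \eqref{e.bound.alpha.apha.bar.2} your route differs from the paper's. The paper does not pass through $\bar{\alpha}^{\delta}$ at all; instead it writes $\alpha^{\delta}(x,\xi)=h\bigl(-\tau^{-1}[\Psi(y)-\Psi(x)]\bigr)$ with $h(z)=1\wedge e^{z}$, uses $h(0)=1$ and the $1$-Lipschitz property to get $|\alpha^{\delta}-1|\lesssim|\Psi(y)-\Psi(x)|$, and then controls $\Psi(y)-\Psi(x)$ by the first-order Taylor estimate of Lemma~\ref{lem:lipshitz+taylor} together with the jump-size bound of Lemma~\ref{lem:size:prop}. Your argument instead splits $|\alpha^{\delta}-1|$ through $\bar{\alpha}^{\delta}$, recycling Lemma~\ref{lem.acc.prob.estim} for one piece and the explicit formula for $\bar{\alpha}^{\delta}$ for the other. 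Both approaches require the same case split on $\delta^{p/2}\|x\|_s^{p}\lessgtr 1$ to absorb the $\|x\|_s^{2p}$ term, and both are equally short. Your version is a bit more modular (it reuses the lemma you just proved rather than repeating a Taylor argument), while the paper's version is more self-contained and avoids invoking Lemma~\ref{lem.acc.prob.estim} a second time.
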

\begin{proof}
See Appendix \ref{app:sec4calc}.
\end{proof}

\subsection{Drift Estimates} \label{sec:drift}
Explicit computations are available for the quantity 
$\bar{\alpha}^{\delta}$.
We will use these results, together with quantification of the
error committed 
in replacing $\alpha^{\delta}$ by $\bar{\alpha}^{\delta}$, 
to estimate the mean drift (in this section) and the diffusion term (in the next
section).

\begin{lem} \label{lem.alpha.bar}
For any $x \in \h^s$ the approximate acceptance probability $\bar{\alpha}^{\delta}(x,\xi)$ satisfies
\begin{equs}
\sqrt{\frac{2\tau}{\delta}} \; \EE_x \Big[ \bar{\alpha}^{\delta}(x,\xi) \cdot
\xi \Big] 
\quad=\quad -C \nabla \Psi(x)
.
\end{equs}
\end{lem}

\begin{proof}
Let $u = \sqrt{\frac{2\tau}{\delta}} \; \EE_x \Big[ \bar{\alpha}^{\delta}(x,\xi)
\cdot \xi \Big] \in \h^s$. To prove 
the lemma it suffices to verify that for all $v \in \h^{-s}$ we have
$
\bra{u,v} \;=\; -\bra{C \nabla \Psi(x), v}$.
To this end, use the decomposition $v = \alpha \nabla \Psi(x) + w$ 
where $\alpha \in \RR$ and $w \in \h^{-s}$ satisfies $\bra{C \nabla \Psi(x), w}
= 0$.
Since $\xi \dist \Normal(0,C)$ the two Gaussian random variables
$Z_{\Psi} \eqdef \bra{\nabla \Psi(x), \xi}$ 
and 
$Z_{w} \eqdef \bra{w, \xi}$ are independent: indeed, $(Z_{\Psi}, Z_{w})$ is a Gaussian vector in $\RR^2$
with $\text{Cov}(Z_{\Psi}, Z_{w}) = 0$.
It thus follows that
\begin{equs}
\bra{u,v} 
&= -2 \; \bra{ \EE_x \big[\bra{\nabla \Psi(x), \xi} 1_{\{ \bra{\nabla \Psi(x),
\xi} > 0 \}} \cdot \xi \big] \;,\; \alpha \nabla \Psi(x) + w} \\
&= -2 \; \EE_x\Big[ \alpha Z_{\Psi}^2 1_{\{ Z_{\Psi} > 0\}} \;+\; Z_{w} \,
Z_{\Psi} 1_{\{ Z_{\Psi} > 0\}}\Big] \\
&= -2\alpha \; \EE_x\Big[ Z_{\Psi}^2 1_{\{ Z_{\Psi} > 0\}} \Big]  \;=\; -\alpha
\EE_x\Big[ Z_{\Psi}^2 \Big]\\
&= -\alpha \bra{C \nabla \Psi(x), \nabla \Psi(x)} \;=\; \bra{-C \nabla \Psi(x),
\alpha \nabla \Psi(x) + w} \\
&= -\bra{C \nabla \Psi(x), v},
\end{equs}
which concludes the proof of Lemma \ref{lem.alpha.bar}.
\end{proof}

\noindent
We now use this explicit computation to give a proof of the drift estimate Lemma
\ref{lem.drift.estim}.

\begin{proof}[Proof of Lemma \ref{lem.drift.estim}]
The function $d^{\delta}$ defined by Equation \eqref{eqn:approx:drift:noise} can
also be expressed as
\begin{equs} \label{e.ddelta.splitting}
d^{\delta}(x) 
\;=\; \Big\{ \frac{(1-2\delta)^{\frac12} - 1}{\delta }\alpha^{\delta}(x) \, x
\Big\}
+
\Big\{ \sqrt{\frac{2\tau}{\delta}} \; \EE_x[\alpha^{\delta}(x,\xi) \, \xi]
\Big\}
\;=\; B_1 + B_2,
\end{equs}
where the mean local acceptance probability $\alpha^{\delta}(x)$ has been
defined in Equation \eqref{e.mean.loc.acc} and 
the two terms $B_1$ and $B_2$ are studied below.
To prove Equation \eqref{e.drift.approx.pos}, it suffices to establish that
\begin{equs} \label{e.bound.Bi}
\|B_1+x\|_s^p \lesssim \delta^{\frac{p}{2}} (1+\|x\|_s^{2p})
\qquad \text{and} \qquad 
\|B_2 + C\nabla \Psi(x)\|_s^p \lesssim  \delta^{\frac{p}{2}} (1+\|x\|_s^{2p}).
\end{equs}
We now establish these two bounds.
\begin{itemize}
\item 
Lemma \ref{lem.acc.prob.estim} and Corollary \ref{cor.alpha.bar} show that
\begin{equs} \label{e.drift.approx.1}
\|B_1 + x \|_s^p
&= \Big\{ \frac{(1-2\delta)^{\frac12} - 1}{\delta}\alpha^{\delta}(x) + 1
\Big\}^p \; \|x\|^p_s \\
&\lesssim \Big\{ \big| \frac{(1-2\delta)^{\frac12} - 1}{\delta} - 1 \big|^p +  
\big| \alpha^{\delta}(x) - 1 \big|^p \Big\}\; \|x\|^p_s \\
&\lesssim \Big\{\delta^p +  \delta^\frac{p}{2} (1+\|x\|_s^p) \Big\}\; \|x\|^p_s 
\;\lesssim\; \delta^\frac{p}{2} (1+\|x\|_s^{2p}).
\end{equs}

\item
Lemma \ref{lem.acc.prob.estim} shows that
\begin{equs} \label{e.drift.approx.2}
\|B_2 + C\nabla \Psi(x)\|_s^p
&= \big\| \sqrt{\frac{2\tau}{\delta}} \, \EE_x[\alpha^{\delta}(x,\xi) \, \xi] +
C\nabla \Psi(x)  \big\|_s^p \\
&\lesssim  \delta^{-\frac{p}{2}} \big\| \EE_x[\{\alpha^{\delta}(x,\xi) -
\bar{\alpha}^{\delta}(x,\xi) \} \, \xi] \big\|_s^p\\
&\qquad+ \big\|  \underbrace{ \sqrt{\frac{2\tau}{\delta}} \,
\EE_x[\bar{\alpha}^{\delta}(x,\xi)  \, \xi] + C\nabla \Psi(x)}_{=0}  \big\|_s^p.
\end{equs}
By Lemma \ref{lem.alpha.bar}, the second term on the right hand equals to
zero. Consequently, the Cauchy-Schwarz inequality 
implies that
\begin{equs}
\|B_2 + C\nabla \Psi(x)\|_s^p
&\lesssim  \delta^{-\frac{p}{2}} \EE_x[ \big|\alpha^{\delta}(x,\xi) -
\bar{\alpha}^{\delta}(x,\xi)\big|^2]^{\frac{p}{2}}\\
&\lesssim  \delta^{-\frac{p}{2}} \Big( \delta^2
(1+\|x\|_s^{4})\Big)^{\frac{p}{2}} \;\lesssim\; \delta^{\frac{p}{2}}
(1+\|x\|_s^{2p}).
\end{equs}
\end{itemize}
Estimates \eqref{e.drift.approx.1} and \eqref{e.drift.approx.2} give Equation
\eqref{e.bound.Bi}.
To complete the proof we establish the bound \eqref{e.sublinear}. The expression
\eqref{e.ddelta.splitting} 
shows that it suffices to verify
$\delta^{-\frac12} \; \EE_x[\alpha^{\delta}(x,\xi) \, \xi] 
\quad \lesssim \quad 1+\|x\|_s$.
To this end, we use Lemma \ref{lem.alpha.bar} and Corollary \ref{cor.alpha.bar}.
By the Cauchy-Schwarz inequality,
\begin{equs}
\Big\| \delta^{-\frac12} \; \EE_x \Big[\alpha^{\delta}(x,\xi) \cdot
\xi \Big] \Big\|_s
&\;=\; 
\Big\| \delta^{-\frac12} \; \EE_x \Big[[\alpha^{\delta}(x,\xi) - 1] \cdot \xi \Big] \Big\|_s\\
&\;\lesssim\; \delta^{-\frac12} \;  \EE_x \Big[ (\alpha^{\delta}(x,\xi) - 1)^2
\Big]^{\frac12}
\;\lesssim\; 1 + \|x\|_s,
\end{equs}
which concludes the proof of Lemma \ref{lem.drift.estim}.
\end{proof}

\subsection{Noise Estimates} \label{sec:noise}
In this section we estimate the error in the approximation $\Gamma^{k,\delta}
\approx \Normal(0,C_s)$.
To this end, let us introduce the covariance operator $D^{\delta}(x)=\EE \Big[ \Gamma^{k,\delta} \otimes_{\h^s} \Gamma^{k,\delta} \;|
x^{k,\delta}=x \Big]$ of the
martingale difference $\Gamma^{\delta}$.
For any $x, u,v \in \h^s$ the operator $D^\delta(x)$ satisfies
\begin{equs}
\EE\Big[\bra{\Gamma^{k,\delta},u}_s \bra{\Gamma^{k,\delta},v}_s
\;|x^{k,\delta}=x \Big]  \;=\; \bra{u, D^\delta(x) v}_s.
\end{equs}
The next lemma gives a quantitative version of the approximation of $D^\delta(x)$ 
by the operator $C_s$.

\begin{lem} {\bf (Noise estimates)} \label{lem.noise.estim}\\
Let Assumptions \ref{ass:1} hold. For any pair of indices $i,j \geq 1$, 
the martingale difference term $\Gamma^\delta(x,\xi)$ satisfies 
\begin{equs} 
&| \bra{\pphi_i, D^{\delta}(x) \; \pphi_j}_s \;-\; \bra{\pphi_i,C_s \;
\pphi_j}_s | 
\;\lesssim\; \delta^{\frac18} \cdot \big( 1+ \|x\|_s \big) 
\label{e.noise.approx.1}\\
&|\tr_{\h^s}\big( D^{\delta}(x) \big) \;-\; \tr_{\h^s}\big( C_s \big) |
\;\lesssim\; \delta^{\frac18} \cdot \big( 1+ \|x\|_s^2 \big).  
\label{e.noise.approx.2}
\end{equs}
with $\{\pphi_j = j^{-s} \phi_j\}_{j \geq 0}$ is an orthonormal basis of $\h^s$.
\end{lem}
\begin{proof}
See Appendix \ref{app:sec4calc}.
\end{proof}

\subsection{A Priori Bound}
 \label{sec:apriori:bound}

Now we have all the ingredients for the proof of the a priori bound presented in
Lemma \ref{lem:apriori:bound} which states that
the rescaled process $z^{\delta}$ given by Equation \eqref{eqn:MCMCe} does not
blow up in finite time. 
\begin{proof}[Proof Lemma \ref{lem:apriori:bound}]
Without loss of generality, assume that $p = 2n$ for some positive integer $n
\geq 1$.
We now prove that there exist constants $\alpha_1, \alpha_2, \alpha_3 > 0$
satisfying
\begin{equs} \label{e.exp.decay}
\EE[\|x^{k,\delta}\|_s^{2n}] \leq (\alpha_1 + \alpha_2 k \, \delta) e^{\alpha_3
k \, \delta}.
\end{equs}
Lemma \ref{lem:apriori:bound} is a straightforward consequence of Equation
\ref{e.exp.decay} 
since this implies that
\begin{equs}
\delta \; \sum_{k \delta < T} \EE[\|x^{k,\delta}\|_s^{2n}] 
\;\leq\; \delta \; \sum_{k \delta < T}(\alpha_1 + \alpha_2 k \, \delta)
e^{\alpha_3 k \, \delta}
\;\asymp\; \int_0^T \; (\alpha_1 + \alpha_2 \;t) \; e^{\alpha_3 \; t} < \infty.
\end{equs}
For notational convenience, let us define $V^{k,\delta} =
\EE\big[\|x^{k,\delta}\|_s^{2n} \big]$.
To prove Equation \eqref{e.exp.decay}, it suffices to establish that
\begin{equs} \label{e.recc}
V^{k+1, \delta}-V^{k, \delta} \leq K \, \delta \cdot  \big( 1+V^{k,\delta}\big),
\end{equs}
where $K>0$ is constant independent from $\delta \in (0,\frac12)$.
Indeed, iterating inequality \eqref{e.recc} leads to the bound
\eqref{e.exp.decay},
for some computable constants $\alpha_1, \alpha_2, \alpha_3 > 0$.
The definition of $V^k$ shows that
\begin{equs} \label{e.binom}
V^{k+1,\delta} - V^{k,\delta}
&= \EE \big[ \| x^{k,\delta} + (x^{k+1,\delta}-x^{k,\delta})\|_s^{2n} -
\|x^{k,\delta}\|_s^{2n} \big] \\
&= \EE \big[ \Big\{ \|x^{k,\delta} \|_s^2 + \|x^{k+1,\delta}-x^{k,\delta}\|_s^2 \\
&\qquad + 2\bra{x^{k,\delta}, x^{k+1,\delta}-x^{k,\delta}}_s\Big\}^n -
\|x^{k,\delta}\|_s^{2n}  \big]
\end{equs}
where the increment $x^{k+1,\delta}-x^{k,\delta}$ is given by
\begin{equs} \label{e.incr}
x^{k+1,\delta}-x^{k,\delta} \;=\; \gamma^{k,\delta} \; \Big(
(1-2\delta)^{\frac12} - 1\Big) x^{k,\delta} 
+ \sqrt{2\delta} \; \gamma^{k,\delta} \xi^k.
\end{equs}
To bound the right-hand-side of Equation \eqref{e.binom}, we use a binomial
expansion and control 
each term. To this end, we establish the following estimate: for all integers
$i,j,k \geq 0$ satisfying 
$i+j+k = n$ and $(i,j,k) \neq (n,0,0)$ 
the following inequality holds,
\begin{equs} \label{e.bound.binom}
\EE\Big[&\Big(\|x^{k,\delta} \|_s^2\Big)^i 
\times \Big(\|x^{k+1,\delta}-x^{k,\delta}\|_s^2 \Big)^j \\
&\qquad \times \Big( \bra{x^{k,\delta},
x^{k+1,\delta}-x^{k,\delta}}_s \Big)^k \Big]
\; \lesssim \; \delta  \, (1+V^{k,\delta}).
\end{equs}
To prove Equation \eqref{e.bound.binom}, we separate two different cases.
\begin{itemize}
\item
Let us suppose $(i,j,k) = (n-1,0,1)$. Lemma \ref{lem.drift.estim} states that 
the approximate drift has a linearly bounded growth so that
\begin{equs}
\Big\| \EE\big[ x^{k+1,\delta} - x^{k,\delta} \;| x^{k,\delta} \big] \Big\|_s
\quad = \quad \delta \; \| d^{\delta}(x^{k,\delta})\|_s 
\quad \lesssim \quad \delta \; ( 1+\|x^{k,\delta}\|_s ).
\end{equs}
Consequently, we have
\begin{equs}
\EE\Big[\Big(\|x^{k,\delta} \|_s^2\Big)^{n-1} \; \bra{x^{k,\delta},
x^{k+1,\delta}-x^{k,\delta}}_s  \Big]
&\lesssim \EE\Big[ \|x^{k,\delta} \|_s^{2(n-1)} \; \|x^{k,\delta} \|_s \; \Big(
\delta \; (1+\|x^{k,\delta}\|_s \Big) \Big] \\
&\lesssim \delta (1 + V^{k,\delta}).
\end{equs}
This proves Equation \eqref{e.bound.binom} in the case $(i,j,k) = (n-1,0,1)$.

\item 
Let us suppose $(i,j,k) \not \in \Big\{ (n,0,0), (n-1,0,1) \Big\}$. Because for
any integer $p \geq 1$,
\begin{equs}
\EE_x\Big[ \|x^{k+1,\delta}-x^{k,\delta}\|_s^p \Big]^{\frac{1}{p}} \lesssim
\delta^{\frac12} \; (1+\|x\|_s)
\end{equs}
it follows from the Cauchy-Schwarz inequality that
\begin{equs}
\EE\Big[\Big(\|x^{k,\delta} \|_s^2\Big)^i \; \Big(
\|x^{k+1,\delta}-x^{k,\delta}\|_s^2 \Big)^j \; \Big( \bra{x^{k,\delta},
x^{k+1,\delta}-x^{k,\delta}}_s \Big)^k \Big]
\; \lesssim \; \delta^{j + \frac{k}{2}} \; (1+ V^{k,\delta}).
\end{equs}
Since we have supposed that $(i,j,k) \not \in \Big\{ (n,0,0), (n-1,0,1) \Big\}$
and $i+j+k = n$, it follows that $j + \frac{k}{2} \geq 1$.
This concludes the proof of Equation \eqref{e.bound.binom},
\end{itemize}
The binomial expansion of Equation \eqref{e.binom} and the bound
\eqref{e.bound.binom} show that Equation \eqref{e.recc} holds. This concludes the proof of Lemma
\ref{lem:apriori:bound}.
\end{proof}

\subsection{Invariance Principle} 
\label{sec:invariance:principle}

Combining the noise estimates of Lemma \ref{lem.noise.estim} and the a priori
bound of Lemma \ref{lem:apriori:bound},
we show that under Assumptions \ref{ass:1} the sequence of rescaled noise
processes defined in 
Equation \ref{e.WN} converges weakly to a Brownian motion. This is the content
of Lemma \ref{lem:bweakconv} 
whose proof is now presented.
\begin{proof}[Proof of Lemma \ref{lem:bweakconv}]
As described in \cite{Berg:86} [Proposition $5.1$], in order to prove that 
$W^{\delta}$ converges weakly to $W$ in $C([0,T], \h^s)$ it suffices to prove
that 
for any $t \in [0,T]$ and any pair of indices $i,j \geq 0$ the following three
limits hold in probability,

\begin{eqnarray}
&\lim_{\delta \rightarrow 0}& \delta \, \sum_{k \delta < t} 
\EE\Big[\|\Gamma^{k,\delta}\|_s^2 \;| x^{k,\delta} \Big] \; = \; t \cdot
\tr_{\h^s}(C_s) 
\label{cond_1}\\
&\lim_{\delta \rightarrow 0}& \delta \, \sum_{k\delta < t} 
\EE\Big[ \bra{ \Gamma^{k,\delta}, \pphi_i}_s \bra{ \Gamma^{k,\delta}, \pphi_j
}_s \; | x^{k,\delta}\Big] 
\; = \; t \,\bra{ \pphi_i, C_s \pphi_j }_s 
\label{cond_2} \\
&\lim_{\delta \rightarrow 0}& \delta \,  \sum_{k\delta < T} 
\EE\Big[\| \Gamma^{k,\delta}\|_s^2 \; \indic_{ \{\| \Gamma^{k,\delta} \|_s^2 
\geq \delta^{-1} \, \epsilon \}} \; |x^{k,\delta} \Big] \; = \; 0  \qquad
\forall \eps > 0
\label{cond_3}.
\end{eqnarray}
We now check that these three conditions are indeed satisfied.
\begin{itemize}
\item Condition \eqref{cond_1}:
since $\EE\Big[ \|\Gamma^{k,\delta}\|_s^2 \;| x^{k,\delta}\Big] =
\tr_{\h^s}(D^{\delta}(x^{k,\delta}))$,
Lemma \ref{lem.noise.estim} shows that
\begin{equs}
\EE\Big[ \|\Gamma^{k,\delta}\|_s^2 \;| x^{k,\delta}\Big]
\;=\; \tr_{\h^s}(C_s) + \err_1^{\delta}(x^{k,\delta})
\end{equs}
where the error term $\err_1^{\delta}$ satisfies $| \err_1^{\delta}(x) |
\lesssim \delta^{\frac18} \; (1+\|x\|_s^2)$.
Consequently, to prove condition \eqref{cond_1} it suffices to establish that 
\begin{equs}
\lim_{\delta \to 0} \; \EE \Big[ \big| \delta \, \sum_{k \delta < T}
\err_1^{\delta}(x^{k,\delta}) \big| \Big] = 0 .
\end{equs}
We have $\EE \big[ \big| \delta \, \sum_{k \delta < T}
\err_1^{\delta}(x^{k,\delta}) \big| \big] 
\lesssim \delta^{\frac18} \Big\{ \delta \cdot \EE \Big[ \sum_{k \delta < T} \;
(1+\|x^{k,\delta}\|_s^2) \Big] \Big\}$
and the a priori bound presented in Lemma \ref{lem:apriori:bound} shows that 
\begin{equs}
\sup_{\delta \in (0,\frac12)} \quad \Big\{ \delta \cdot \EE \Big[ \sum_{k \delta
< T} \; (1+\|x^{k,\delta}\|_s^2) \Big] \Big\} < \infty.
\end{equs}
Consequently $\lim_{\delta \to 0} \; \EE \big[ \big| \delta \, \sum_{k \delta <
T} \err_1^{\delta}(x^{k,\delta}) \big| \big] = 0$, 
and the conclusion follows.

\item Condition \eqref{cond_2}:
Lemma \ref{lem.noise.estim} states that
\begin{equs}
\EE_k \Big[ \bra{ \Gamma^{k,\delta}, \pphi_i}_s \bra{ \Gamma^{k,\delta}, \pphi_j
}_s \Big] 
\;=\; \bra{ \pphi_i, C_s \pphi_j }_s + \err_2^{\delta}(x^{k,\delta})
\end{equs}
where the error term $\err_2^{\delta}$ satisfies $| \err_2^{\delta}(x) |
\lesssim \delta^{\frac18} \; (1+\|x\|_s)$.
The exact same approach as the proof of Condition \eqref{cond_1} gives the
conclusion.

\item Condition \eqref{cond_3}: from the Cauchy-Schwarz and Markov's inequalities it
follows that
\begin{equs}
\EE\Big[\| \Gamma^{k,\delta}\|_s^2 \; \indic_{ \{\| \Gamma^{k,\delta} \|_s^2
\geq \delta^{-1} \, \epsilon \}} \Big]
&\leq \EE\Big[\| \Gamma^{k,\delta}\|_s^4 \Big]^{\frac12} \cdot \PP\Big[ \|
\Gamma^{k,\delta} \|_s^2 \geq \delta^{-1} \, \epsilon \Big]^{\frac12}\\
&\leq \EE\Big[\| \Gamma^{k,\delta}\|_s^4 \Big]^{\frac12} \cdot \Big\{
\frac{\EE\big[\| \Gamma^{k,\delta}\|_s^4 \big]}{(\delta^{-1} \,
\epsilon)^2}\Big\}^{\frac12}\\
&\leq \frac{\delta}{\eps} \cdot \EE\Big[\| \Gamma^{k,\delta}\|_s^4
\Big].
\end{equs}
Lemma \ref{lem.drift.estim} readily shows that 
$\EE\| \Gamma^{k,\delta}\|_s^4 \lesssim 1+\|x\|_s^4$
Consequently we have
\begin{equs}
\EE\Big[ \Big| \delta \,  \sum_{k\delta < T} \EE\Big[\| \Gamma^{k,\delta}\|_s^2
\; \indic_{ \{\| \Gamma^{k,\delta} \|_s^2 
\geq \delta^{-1} \, \epsilon \}} \; |x^{k,\delta} \Big] \;\Big| \Big] 
&\leq \frac{\delta}{\eps} \times \Big\{  \delta \cdot \EE \Big[ \sum_{k
\delta < T} \; (1+\|x^{k,\delta}\|_s^4) \Big] \Big\}
\end{equs}
and the conclusion again follows from the a priori bound Lemma
\ref{lem:apriori:bound}.
\end{itemize}
\end{proof}

\section{Quadratic Variation}
\label{sec:5}

As discussed in the introduction, the SPDE 
\eqref{eqn:limit:SPDEG}, and the Metropolis-Hastings
algorithm pCN which approximates it for small $\delta$,
do not satisfy the smoothing property
and so almost sure properties of the limit measure $\pi^\tau$
are not necessarily seen at finite time. To illustrate this point, 
we introduce in this section a functional $V:\h \to \RR$ that is well defined on a dense subset of $\h$ and such that $V(X)$ is $\pi^{\tau}$-almost 
surely well defined and satisfies $\PP\big( V(X) = 1\big) = \tau$ 
for $X \dist \pi^{\tau}$. The quantity $V$ corresponds to the usual quadratic variation if $\pi_0$ is the Wiener measure.
We show that the quadratic variation like quantity $V(x^{k,\tau})$ of a pCN 
Markov chain converges
as $k \to \infty$ to the almost sure quantity $\tau$.
We then prove that piecewise linear interpolation
of this quantity solves, in the small $\delta$ limit,
a linear ODE (the
``fluid limit'') whose globally attractive stable
state is the almost sure quantity $\tau$. This quantifies
the manner in which the pCN method approaches
statistical equilibrium.

\subsection{Definition and Properties}
Under Assumptions \ref{ass:1}, the Karhunen-Lo{\'e}ve expansion shows 
that $\pi_0$-almost every $x \in \h$ satisfies 
$$\lim_{N \to \infty} \; N^{-1} \, \sum_{j=1}^N
\frac{\bra{x,\phi_j}^2}{\lambda_j^2} = 1.$$ 
This motivates the definition of the quadratic variation like quantities
\begin{equs}
V_-(x) \eqdef \liminf_{N \to \infty} \; N^{-1} \, \sum_{j=1}^N
\frac{\bra{x,\phi_j}^2}{\lambda_j^2}
\quad \text{and} \quad
V_+(x) \eqdef \limsup_{N \to \infty} \; N^{-1} \, \sum_{j=1}^N
\frac{\bra{x,\phi_j}^2}{\lambda_j^2}.
\end{equs}
When these two quantities are equal the vector $x \in \h$ is said 
to possess a {\em quadratic variation} $V(x)$ defined as $V(x) = V_-(x) =
V_+(x)$. 
Consequently, $\pi_0$-almost every $x \in \h$ possesses a quadratic variation
$V(x)=1$. 
It is a straightforward consequence that
$\pi_0^{\tau}$-almost every and $\pi^{\tau}$-almost every
$x \in \h$ possesses a quadratic variation $V(x)=\tau$.
Strictly speaking this only coincides with quadratic
variation when $C$ is the covariance of a (possibly
conditioned) Brownian motion; however we use the
terminology more generally in this section.
The next lemma proves that the quadratic variation $V(\cdot)$ behaves as it
should do with respect to additivity.

\begin{lem} \label{lem.quad.add} {\bf (Quadratic Variation Additivity)} \\
Consider a vector $x \in \h$ and a Gaussian random variable $\xi \dist \pi_0$ 
and a real number $\alpha \in \RR$.
Suppose that the vector $x \in \h$ possesses a finite quadratic variation
$V(x)<+\infty$. 
Then almost surely the vector $x+\alpha \xi \in \h$ possesses a quadratic
variation 
that is equal to
\begin{equs}
V(x + \alpha \xi) \; = \; V(x) + \alpha^2. 
\end{equs} 
\end{lem}

\begin{proof}
Let us define $V_N \eqdef N^{-1} \sum_{1}^N \frac{\bra{x,\phi_j} \cdot
\bra{\xi,\phi_j}}{\lambda_j^2}$. To prove Lemma \ref{lem.quad.add} 
it suffices to prove that almost surely the following limit holds
\begin{equs}
\lim_{N \to \infty} \, V_N \:=\:  0.
\end{equs}
Borel-Cantelli Lemma shows that it suffices to prove that for every fixed $\eps
> 0$ 
we have $\sum_{N \geq 1} \; \PP\big[ \big|V_N \big| > \eps \big] < \infty$.
Notice then that $V_N$ is a centred Gaussian random variables with variance 
\begin{equs}
\textrm{Var}(V_N) = \frac{1}{N} \, \Big( N^{-1} \sum_1^N
\frac{\bra{x,\phi_j}^2}{\lambda_j^2}\Big) \asymp \frac{V(x)}{N}.
\end{equs}
It readily follows that $\sum_{N \geq 1} \; \PP\big[ \big|V_N \big| > \eps \big]
< \infty$, 
finishing the proof of the Lemma.
\end{proof}

\subsection{Large $k$ Behaviour of Quadratic Variation for pCN}

The pCN algorithm at temperature $\tau>0$ 
and discretization parameter 
$\delta>0$ proposes a move from $x$ to $y$
according to the dynamics
\begin{equs}
y = (1-2 \delta)^{\frac12} x + (2\delta \tau)^{\frac12} \, \xi
\qquad \text{with} \qquad
\xi \dist \pi_0.
\end{equs}
This move is accepted with probability $\alpha^{\delta}(x,y)$. In this case,
Lemma 
\ref{lem.quad.add} shows that if the quadratic variation
$V(x)$ exists then the quadratic variation 
of the proposed move $y \in \h$ exists and satisfies 
\begin{equs} \label{e.delta.quad.when.accepted}
\frac{V(y)-V(x)}{\delta} \;=\; -2 (V(x) - \tau).
\end{equs}
Consequently, one can prove that for any finite time step $\delta > 0$ and 
temperature $\tau>0$ the quadratic variation of the MCMC algorithm 
converges to $\tau$.

\begin{prop} {\bf (Limiting Quadratic Variation)} \label{prop.limiting.qv}
Let Assumptions \ref{ass:1} hold and $\{x^{k,\delta}\}_{k \geq 0}$ be the Markov
chain 
of section \ref{sec:algorithm}. Then almost surely the quadratic variation 
of the Markov chain converges to $\tau$,
\begin{equs}
\lim_{k \to \infty} \, V(x^{k,\delta}) \;=\; \tau.
\end{equs}
\end{prop}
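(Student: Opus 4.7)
The plan is to reduce the problem to a deterministic-looking recursion for $V(x^{k,\delta})$ and then show that the accept/reject mechanism drives this recursion to $\tau$.

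First, I would establish a clean recursion. The definition of $V$ is homogeneous of degree two, so $V(\alpha x) = \alpha^2 V(x)$ when $V(x)$ exists. Applying this to the proposal $y^{k,\delta} = (1-2\delta)^{1/2} x^{k,\delta} + (2\delta\tau)^{1/2} \xi^k$ together with Lemma \ref{lem.quad.add} (with $\alpha = (2\delta\tau)^{1/2}$, noting $V(\xi)=1$ $\pi_0$-a.s.), one gets almost surely
\begin{equs}
V(y^{k,\delta}) \;=\; (1-2\delta)\, V(x^{k,\delta}) + 2\delta\tau .
\end{equs}
Since $x^{k+1,\delta} = \gamma^{k,\delta} y^{k,\delta} + (1-\gamma^{k,\delta}) x^{k,\delta}$, and because $\gamma^{k,\delta}\in\{0,1\}$, this gives
\begin{equs}
V(x^{k+1,\delta}) - \tau \;=\; \big(1 - 2\delta\, \gamma^{k,\delta}\big)\,\big(V(x^{k,\delta}) - \tau\big),
\end{equs}
which is exactly Equation \eqref{e.delta.quad.when.accepted} iterated.

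Second, setting $u^k = V(x^{k,\delta}) - \tau$ and iterating gives $u^k = u^0 \prod_{j=0}^{k-1}(1 - 2\delta\gamma^{j,\delta})$. Each factor lies in $[1-2\delta, 1]$, so $|u^k|$ is non-increasing, and $u^k \to 0$ almost surely as soon as the chain accepts infinitely often, since each acceptance multiplies $|u^k|$ by the contraction factor $(1-2\delta)<1$.

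Third, I would establish that $\sum_{k\ge 0}\gamma^{k,\delta} = +\infty$ almost surely by a conditional second Borel--Cantelli argument. The local mean acceptance $\alpha^\delta(x) = \EE_x[\alpha^\delta(x,\xi)]$ is strictly positive for every $x \in \h^s$, because the Metropolis--Hastings ratio is itself strictly positive. On the hypothetical event $\{\sum_k \gamma^{k,\delta}<\infty\}$ there is a (random) time $k_\star$ after which $x^{k,\delta}$ is constant equal to some $x_\star \in \h^s$; from $k_\star$ on, the $\gamma^{k,\delta}$ are, conditionally, independent Bernoullis with common positive success probability $\alpha^\delta(x_\star)$, so $\sum_{k>k_\star}\PP(\gamma^{k,\delta}=1\mid\F^{k-1,\delta}) = +\infty$ and L\'evy's conditional Borel--Cantelli forces infinitely many acceptances past $k_\star$, contradicting the assumption. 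Hence $\sum_k \gamma^{k,\delta} = \infty$ a.s., and the product goes to zero, giving $V(x^{k,\delta})\to\tau$.

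The main obstacle, and the reason I would spend extra care on Step~1, is that the recursion implicitly requires $V(x^{k,\delta})$ to exist and be finite for all $k$; this is not automatic at $k=0$ since $x_* \in \h^s$ does not immediately imply $V_+(x_*)<\infty$ (one only has $\sum j^{2s} x_{*,j}^2<\infty$, whereas $V_+$ is controlled by $\sum j^{2\kappa} x_{*,j}^2/N$). I would handle this either by strengthening the standing assumption to $V_+(x_*)<\infty$, or by inspecting the proof of Lemma \ref{lem.quad.add} to observe that the cross-term Borel--Cantelli argument only needs $V_+(x)<\infty$, not the existence of $V(x)$, so that after the first accepted step the Gaussian contribution regularises the iterate and the subsequent recursion is rigorous. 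Once this technical caveat is settled, the argument above closes the proof.
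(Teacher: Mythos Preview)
Your argument is essentially the paper's: derive the one-step recursion $V(x^{k+1,\delta})-\tau=(1-2\delta\,\gamma^{k,\delta})\bigl(V(x^{k,\delta})-\tau\bigr)$ from Lemma~\ref{lem.quad.add}, show there are infinitely many acceptances (the paper argues, as you do, that otherwise the chain eventually freezes at some $x_\star$ and the subsequent $\gamma^{k,\delta}$ are i.i.d.\ Bernoulli with positive mean, so Borel--Cantelli gives a contradiction), and conclude by the deterministic contraction $u_{k+1}-u_k=-2\delta(u_k-\tau)$.

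Your concern about the existence of $V(x_*)$ is legitimate; the paper leaves it implicit in the proposition but adds the hypothesis $V(x_*)<\infty$ explicitly in the subsequent Theorem~\ref{th.fluid.limit}, which is precisely your first proposed fix. Your second proposed fix, however, does not work: if $V_-(x_*)\neq V_+(x_*)$ with $V_+(x_*)<\infty$, the cross-term Borel--Cantelli estimate in the proof of Lemma~\ref{lem.quad.add} still yields
\[
V_\pm\bigl((1-2\delta)^{1/2}x_*+\sqrt{2\delta\tau}\,\xi\bigr)=(1-2\delta)\,V_\pm(x_*)+2\delta\tau,
\]
so the Gaussian perturbation translates the $\liminf$ and $\limsup$ by the same amount rather than equalising them. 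No regularisation occurs after the first accepted step, and $V(x^{k,\delta})$ remains undefined for all $k$; one must simply assume $V(x_*)$ exists.
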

\begin{proof}
Let us first show that the number of accepted moves is infinite.
If this were not the case, the Markov chain would eventually reach a position 
$x^{k,\delta} = x \in \h$ such that all subsequent proposals 
$y^{k+l} = (1-2\delta)^{\frac12} \, x^k + (2 \tau \delta)^{\frac12} \,
\xi^{k+l}$ would be refused. 
This means that the $\iid$ Bernoulli random variables 
$\gamma^{k+l} = \textrm{Bernoulli} \big(\alpha^{\delta}(x^k,y^{k+l}) \big)$
satisfy 
$\gamma^{k+l} = 0$ for all $l \geq 0$. This can only happen with probability
zero. 
Indeed, since  $\PP[\gamma^{k+l} = 1] > 0$, one can use Borel-Cantelli Lemma 
to show that almost surely there exists $l \geq 0$ such that $\gamma^{k+l} = 1$.
To conclude the proof of the Proposition, notice then that
the sequence $\{ u_{k} \}_{k \geq 0}$ defined by 
$u_{k+1}-u_k \;=\; -2 \delta (u_k - \tau)$ converges to $\tau$.
\end{proof}

\subsection{Fluid Limit for Quadratic Variation of pCN}

To gain further insight into the rate at which the
limiting behaviour of the quadratic variation is
observed for pCN we derive an ODE ``fluid limit''
for the Metropolis-Hastings algorithm.
We introduce the continuous time process 
$t \mapsto v^{\delta}(t)$ 
defined as continuous piecewise linear
interpolation of the process $k \mapsto V(x^{k,\delta})$,
\begin{equs} \label{e.speed.up.qv}
v^{\delta}(t)= \frac{1}{\delta} \, (t-t_k) \, V(\, x^{k+1,\delta} \,)
+ \frac{1}{\delta} \, (t_{k+1}-t) \, V( \, x^{k,\delta} \,)
\quad \text{for} \quad 
t_k \leq t < t_{k+1}.
\end{equs}
Since the acceptance probability of pCN approaches
one as $\delta \to 0$ (see Corollary \ref{cor.alpha.bar})
Equation \eqref{e.delta.quad.when.accepted} shows 
heuristically that the trajectories 
of the process $t \mapsto v^{\delta}(t)$ 
should be well approximated 
by the solution of the (non-stochastic) differential equation 
\begin{equs} \label{eqn:limit:ODE}
    \dot{v}   &= -2 \, (v - \tau).
\end{equs}
We prove such a result, in the sense of convergence
in probability in $C([0,T],{\mathbb R})$:

\begin{thm} \label{th.fluid.limit} {\bf (Fluid Limit For
Quadratic Variation)}
Let Assumptions \ref{ass:1} hold.
Let the Markov chain $x^{\delta}$ start at fixed position $x_* \in \h^s$. 
Assume that $x_* \in \h$ possesses a finite quadratic variation, $V(x_*) <
\infty$.
Then the function $v^{\delta}(t)$
converges in probability in $C([0,T], \RR)$, as $\delta$ goes to zero,  to the
solution of the differential equation
\eqref{eqn:limit:ODE} with initial condition $v_0 = V(x_*)$.
\end{thm}

\noindent
As already indicated, the heart of the proof consists in 
showing that the acceptance probability 
of the algorithm converges to one as $\delta$ goes to zero. 
We prove such a result as Lemma \ref{lem.accept.moves} 
below, and then proceed to prove Theorem \ref{th.fluid.limit}.
To this end we introduce $t^{\delta}(k)$,  
the number of accepted moves,
\begin{equs}
t^{\delta}(k) \; \eqdef \; \sum_{l \leq k} \gamma^{l, \delta},
\end{equs}
where $\gamma^{l, \delta} = \textrm{Bernoulli}(\alpha^{\delta}(x,y))$ 
is the Bernoulli random variable defined in 
Equation \eqref{eqn:mark:dynamic}. Since the acceptance probability of the
algorithm 
converges to $1$ as $\delta \to 0$, the approximation $t^{\delta}(k) \approx k$
holds. 
In order to prove 
a fluid limit result on the interval $[0,T]$ one needs to prove that the
quantity 
$\big| t^{\delta}(k) - k \big|$ is small when compared to $\delta^{-1}$. The
next Lemma 
shows that such a bounds holds uniformly on the interval $[0,T]$.

\begin{lem} {\bf (Number of Accepted Moves)} \label{lem.accept.moves}
Let Assumptions \ref{ass:1} hold. The number of accepted moves
$t^{\delta}(\cdot)$ 
verifies
\begin{equs}
\lim_{\delta \to 0} \quad \sup \big\{ \delta \cdot \big| t^{\delta}(k) -k \big| 
\; : 0 \leq k \leq T \delta^{-1} \big\} = 0
\end{equs}
where the convergence holds in probability.
\end{lem}

\begin{proof}
The proof is given in Appendix \ref{app:accmoves}.
\end{proof}

\noindent
We now complete the proof of Theorem \ref{th.fluid.limit} using
the key Lemma \ref{lem.accept.moves}.

\begin{proof}[of Theorem \ref{th.fluid.limit}]
The proof consists in showing that the trajectory of the 
quadratic variation process behaves as if all the move were 
accepted. The main ingredient is the uniform lower bound 
on the acceptance probability given by Lemma \ref{lem.accept.moves}.

\noindent
Recall that $v^{\delta}(k \delta) = V(x^{k,\delta})$.
Consider the piecewise linear
function $\what{v}^{\delta}( \cdot) \in C([0,T], \RR)$ defined 
by linear interpolation of the values
$\hat{v}^{\delta}(k \delta) = u^{\delta}(k)$ 
and where the sequence $\{u^{\delta}(k) \}_{k \geq 0}$ satisfies $u^{\delta}(0)
= V(x_*)$ and 
\begin{equs}
u^{\delta}(k+1) - u^{\delta}(k) \;=\; -2 \delta \, (u^{\delta}(k) - \tau).
\end{equs}
The value $u^{\delta}(k) \in \RR$ represents the quadratic variation of
$x^{k,\delta}$ 
if the $k$ first moves of the MCMC algorithm had been accepted.
One can readily check that as $\delta$ goes to zero 
the sequence of continuous 
functions $\hat{v}^{\delta}(\cdot)$ converges in $C([0,T], \RR)$ to 
the solution $v(\cdot)$ of the differential 
equation \eqref{eqn:limit:ODE}. Consequently, to prove Theorem
\ref{th.fluid.limit} 
it suffices to show that for any $\eps > 0$ we have
\begin{equs} \label{e.sup.conv.prob}
\lim_{\delta \to 0} \; \PP \Big[ \sup \Big\{  \big| V(x^{k,\delta}) -
u^{\delta}(k) \big| \, : \, k \leq \delta^{-1} T \Big\} \, > \, \eps \Big] \;=\;
0.
\end{equs}
The definition of the number of accepted moves $t^{\delta}(k)$ is such that 
$V(x^{k,\delta})  = u^{\delta}(t^{\delta}(k))$. Note that
\begin{equs}
\label{eq:ubd}
u^{\delta}(k)=(1-2\delta)^k u_0+\bigl(1-(1-2\delta)^k\bigr)\tau.
\end{equs}
Hence, for any integers $t_1, t_2 \geq 0,$ we have 
$\big|u^{\delta}(t_2) - u^{\delta}(t_1) \big| \leq  \big| u^{\delta}(|t_2-t_1|)
- u^{\delta}(0) \big|$
so that 
\begin{equs}
\big|V(x^{k,\delta})  - u^{\delta}(k) \big| 
\;=\; \big| u^{\delta}(t^{\delta}(k))  - u^{\delta}(k) \big| 
\;\leq\; \big| u^{\delta}(k - t^{\delta}(k))  - u^{\delta}(0) \big|. 
\end{equs}
Equation \eqref{eq:ubd} shows that $|u^{\delta}(k)-u^{\delta}(0)| \; \lesssim \;
\big( 1 - (1-2\delta)^k \big)$.
This implies that
\begin{equs}
\big| V(x^{k,\delta}) - u^{\delta}(k) \big|
\;\lesssim\; 1 - (1-2\delta)^{k-t^{\delta}(k)} 
\;\lesssim\; 1 - (1-2\delta)^{\delta^{-1} \, S} 
\end{equs}
where $S = \sup \big\{ \delta \cdot \big| t^{\delta}(k) -k \big|  \; : 0 \leq k
\leq T \delta^{-1} \big\}$.
Since for any $a>0$ we have $1-(1-2\delta)^{a \delta^{-1}} \to 1-e^{-2a}$, 
Equation  \eqref{e.sup.conv.prob} follows 
if one can prove that as $\delta$ goes to zero the supremum $S$ 
converges to zero in probability: this is precisely the content of Lemma 
\ref{lem.accept.moves}. 
This concludes the proof of Theorem \ref {th.fluid.limit}.
\end{proof}

\section{Numerical Results}
\label{sec:6}
In this section, we present some numerical
simulations demonstrating our results.
We consider the minimization of a functional 
$J(\cdot)$ defined on the Sobolev space 
$H^1_0(\RR).$ Note that functions $x \in H^1_0([0,1])$ are continuous 
and satisfy $x(0) = x(1) = 0;$ thus $H^1_0(\RR) \subset C^0([0,1]) \subset L^2(0,1)$.
For a given real parameter $\lambda > 0$, the functional 
$J:H^1_0([0,1]) \to \RR$ is composed of two competitive terms,
as follows:
\begin{equs}
J(x) \; = \; \frac{1}{2} \int_0^1 \bigl|\dot{x}(s) \bigr|^2 \, ds \;+\; 
\frac{\lambda}{4} \, \int_0^1 \bigl(x(s)^2 - 1\bigr)^2 \, ds.
\label{eq:Jex}
\end{equs}
The first term penalizes functions that deviate from being flat,
whilst the second term 
penalizes functions that deviate from one in absolute value. 
Critical points of the functional $J(\cdot)$ solve the 
following Euler-Lagrange equation:
\begin{equs} \label{e.euler.lagrange}
\ddot{x} + \lambda \, x(1-x^2) &=0\\
x(0)=x(1)&=0. 
\end{equs}
Clearly $x \equiv 0$ is a solution 
for all $\lambda \in {\mathbb R}^+.$
If $\lambda \in (0,\pi^2)$ then this is the unique
solution of the Euler-Lagrange 
equation and is the global minimizer of $J$. For each integer
$k$ there is a supercritical bifurcation at parameter
value $\lambda=k^2\pi^2.$ For $\lambda>\pi^2$ there
are two minimizers, both of one sign and one being minus the other.
The three different solutions of \eqref{e.euler.lagrange} 
which exist for $\lambda=2\pi^2$
are displayed in Figure \ref{fig.sol.euler.lagrange}, 
at which value the zero (blue dotted)
solution is a saddle point, and the two green solutions are
the global minimizers of $J$. These properties of $J$ are
overviewed in, for example, \cite{henry}. 
We will show how these global minimizers can emerge from
an algorithm whose only ingredients are an ability
to evaluate
$\Psi$ and to sample from the Gaussian measure
with Cameron-Martin norm $\int_0^1 |{\dot x}(s)|^2 ds.$
We emphasize that we are not advocating this as the optimal
method for solving the Euler-Lagrange 
equations \eqref{e.euler.lagrange}. We have chosen this
example for its simplicity, in order to illustrate
the key ingredients of the theory developed in this paper.

\begin{figure}[hh]
  \centering
  \includegraphics[width= 4in]{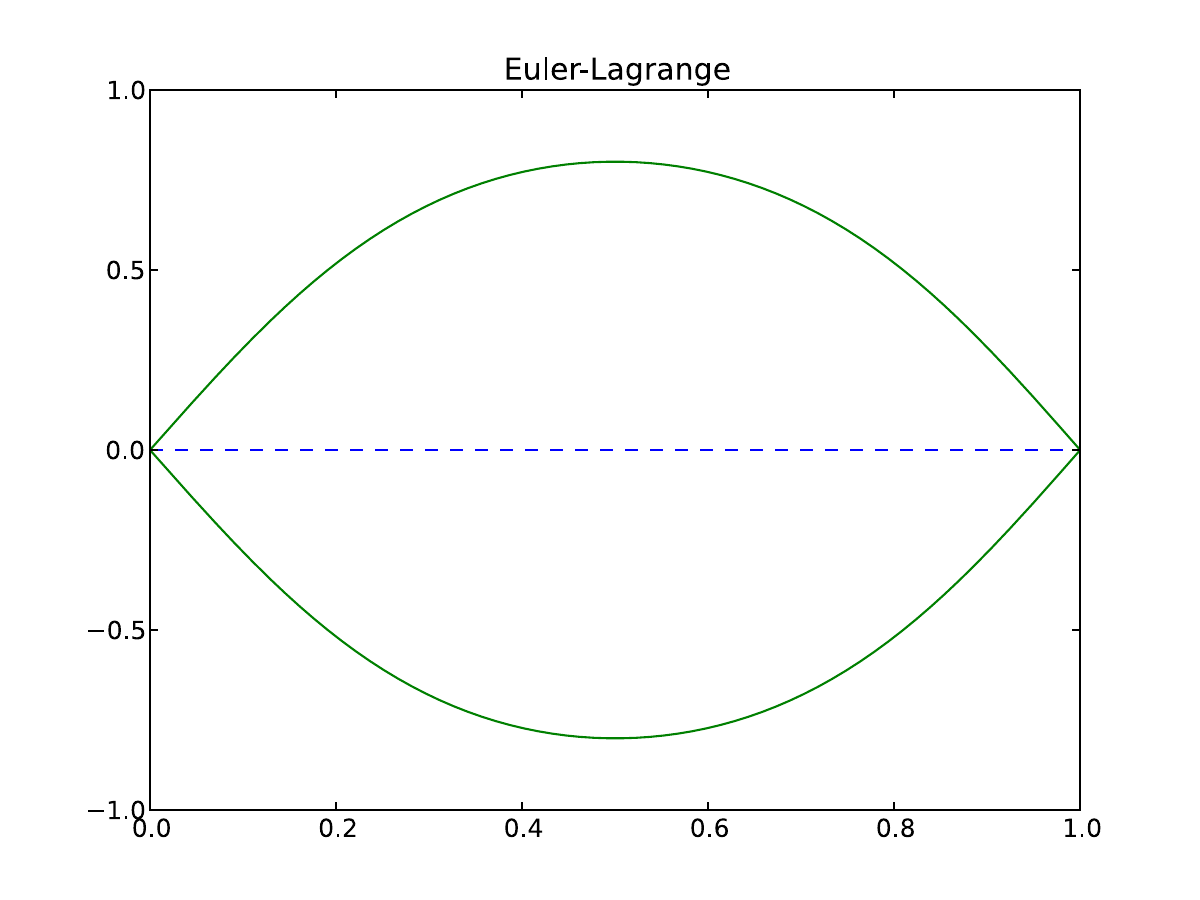}
  \caption{ \label{fig.sol.euler.lagrange} The three solutions of the
Euler-Lagrange Equation 
  \eqref{e.euler.lagrange}  for $\lambda = 2 \pi^2$. Only the two non-zero
solutions 
  are global minimum of the functional $J(\cdot)$. The dotted solution is a
local maximum 
  of $J(\cdot)$.} 
\end{figure}

The pCN algorithm to minimize $J$ given by
\eqref{eq:Jex} is implemented on $L^2([0,1])$. 
Recall from \cite{Dapr:Zaby:92} 
that the Gaussian measure $\Normal(0,C)$ 
may be identified by finding the covariance
operator for which the $H^1_0([0,1])$ norm 
$\|x\|^2_C \eqdef \int_0^1 \bigl|\dot{x}(s)\bigr|^2 \, ds$
is the Cameron-Martin norm.
In \cite{Hair:etal:05} it is shown that the
Wiener bridge measure $\WW_{0 \to 0}$ on $L^2([0,1])$
has precisely this Cameron-Martin norm; indeed it is
demonstrated that $C^{-1}$ is the densely defined
operator $-\frac{d^2}{ds^2}$ with $D(C^{-1})=H^2([0,1])\cap H^1_0([0,1]).$ 
In this regard it is also instructive to adopt the
physicists viewpoint that
$$\WW_{0 \to 0}(dx) \propto \exp \Big( -\frac{1}{2}  \int_0^1 
\bigl|\dot{x}(s)\bigr|^2 \,ds\Big) \, dx$$ 
although, of course,
there is no Lebesgue measure in infinite dimensions. 
Using an integration by parts, together with the boundary
conditions on $H^1_0([0,1])$, then gives
$$\WW_{0 \to 0}(dx) \propto \exp \Big( \frac{1}{2}  \int_0^1 
x(s)\frac{d^2 x}{ds^2}(s)\,ds \Big) \, dx$$ 
and the inverse of $C$ is clearly identified as 
the differential operator above.
See \cite{chorinhald} for basic discussion of the
physicists viewpoint on Wiener measure. 
For a given temperature parameter $\tau$ the Wiener bridge
measure $\WW_{0 \to 0}^{\tau}$ on $L^2([0,1])$ is defined as 
the law of
$\big\{ \sqrt{\tau} \, W(t) \big\}_{t \in [0,1]}$ 
where $\{ W(t) \}_{t \in [0,1]}$ is a standard 
Brownian bridge on $[0,1]$ drawn from $\WW_{0 \to 0}$.

The posterior distribution $\pi^{\tau}(dx)$ is defined 
by the change of probability formula
\begin{equs}
\frac{d \pi^{\tau}}{d \WW_{0 \to 0}^{\tau}}(x) \;\propto\; e^{-\Psi(x)} 
\qquad \text{with} \qquad
\Psi(x) = \frac{\lambda}{4} \, \int_0^1 \bigl(x(s)^2-1\bigr)^2 \, ds.
\end{equs}
Notice that 
$\pi^{\tau}_0\bigl(H^1_0([0,1)\bigr)  
=\pi^{\tau}\bigl(H^1_0([0,1)\bigr) = 0$ since a Brownian bridge is almost
surely 
not differentiable anywhere on $[0,1]$. For this
reason, the algorithm is 
implemented on $L^2([0,1])$ even though the functional 
$J(\cdot)$ is defined on the Sobolev space $H^1_0([0,1])$. 
In terms of Assumptions \ref{ass:1}(1) we have $\kappa=1$
and the measure $\pi^{\tau}_0$ is supported on 
${\cal H}^r$ if and only if $r<\frac12$, 
see Remark \ref{rem:one}; 
note also that $H^1_0([0,1])={\cal H}^1.$ 
Assumption \ref{ass:1}(2) is satisfied for any choice
$s \in [\frac14,\frac12)$ because ${\cal H}^s$ is embedded
into $L^4([0,1])$ for $s \ge \frac14.$ We add here that 
Assumptions \ref{ass:1}(3-4) do not hold globally, but
only locally on bounded sets, but the numerical results
below will indicate that the theory developed in this paper
is still relevant and could be extended to nonlocal
versions of Assumptions \ref{ass:1}(3-4), with considerable
further work.

Following 
section \ref{sec:algorithm}, the pCN Markov chain at temperature $\tau>0$ 
and time discretization $\delta>0$ proposes moves 
from $x$ to $y$ according to
\begin{equs}
y = (1-2\delta)^{\frac12} \, x \;+\; (2\delta \tau)^{\frac{1}{2}} \, \xi
\end{equs}
where $\xi \in C([0,1], \RR)$ is a standard Brownian bridge on $[0,1]$. 
The move $x \to y$ is accepted with probability 
$\alpha^{\delta}(x,\xi) = 1 \wedge \exp\big( -\tau^{-1}[\Psi(y) -
\Psi(x)]\big)$. 
Figure \ref{fig.error} displays the convergence of the 
Markov chain $\{x^{k,\delta}\}_{k \geq 0}$ to a minimizer of the functional 
$J(\cdot)$. 
Note that this convergence is not shown with respect to
the space $H^1_0([0,1])$ on which $J$ is defined, but
rather in $L^2([0,1])$; indeed
$J(\cdot)$ is almost surely infinite when evaluated at
samples of the pCN algorithm, precisely because 
$\pi^{\tau}_0\bigl(H^1_0([0,1)\bigr) = 0,$ as discussed above.

\begin{figure}[hh]
\centering
\includegraphics[width= 4in]{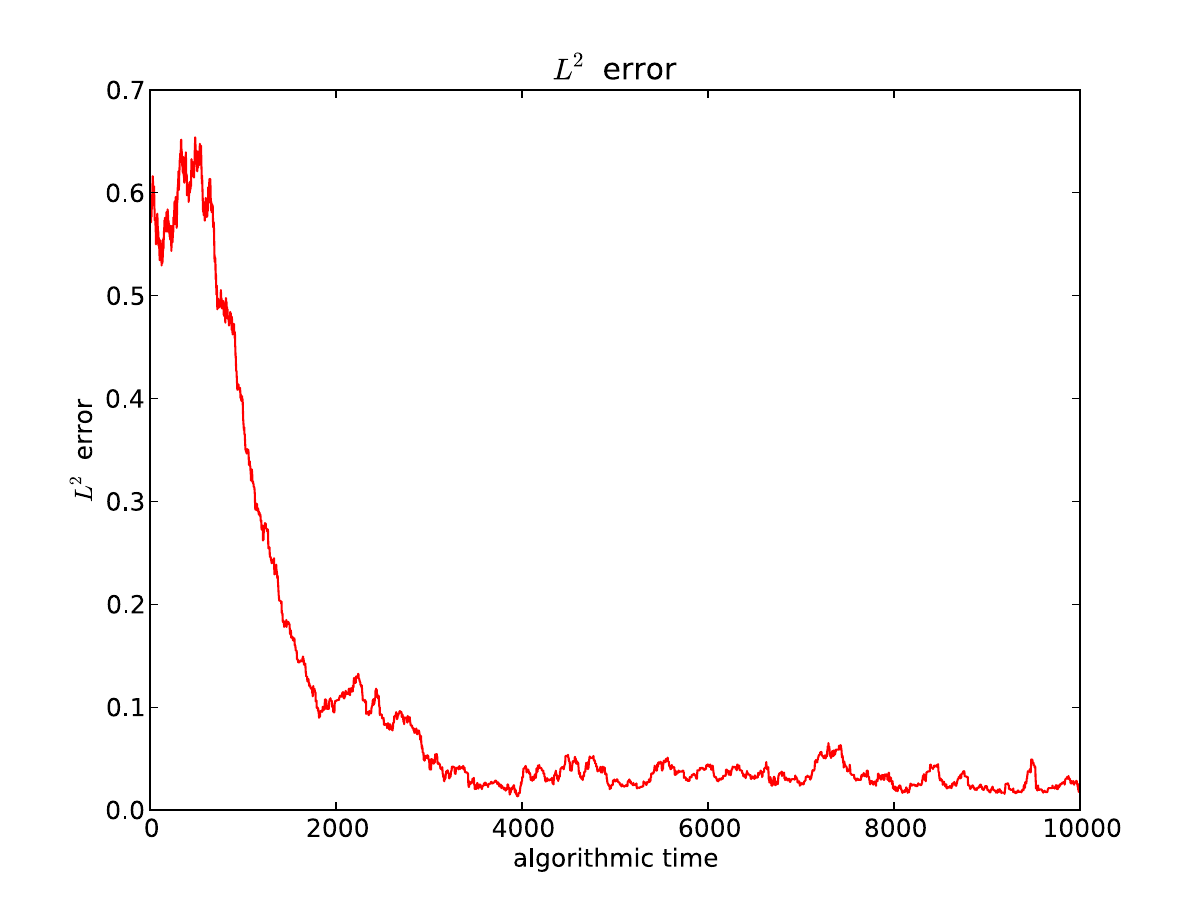}
\caption{ \label{fig.error} pCN parameters: $\lambda = 2\pi^2$, $\delta =
1.10^{-2}$, $\tau = 1.10^{-2}$. The algorithm is started at the zero function,
$x^{0,\delta}(t) = 0$ for $t \in [0,1]$.
After a transient phase, the algorithm fluctuates around a global minimizer of 
functional $J(\cdot)$. The $L^2$ error $\|x^{k,\delta} -
(\text{minimizer})\|_{L^2}$ is plotted as a function of the algorithmic time
$k$.}
 \end{figure}

Of course the algorithm does not converge {\em exactly} 
to a minimizer 
of $J(\cdot),$ but fluctuates in a neighborhood of it. As described in the
introduction 
of this article, in a finite dimensional setting 
the target probability distribution $\pi^{\tau}$ has Lebesgue 
density proportional to $\exp\big( -\tau^{-1} \, J(x)\big)$. This intuitively
shows that
the size of the fluctuations around the minimum of the functional $J(\cdot)$ 
are of size proportional to $\sqrt{\tau}$. 
Figure \ref{fig.error_tau} shows this phenomenon 
on log-log scales: the asymptotic mean 
error $\EE\big[ \|x - (\text{minimizer})\|_2\big]$ 
is displayed as a function of the temperature $\tau$. 
Figure \ref{fig.qv} illustrates Theorem \ref{th.fluid.limit}. One can 
observe the path $\{ v^{\delta}(t) \}_{t \in [0,T]}$ for a finite time 
step discretization parameter $\delta$ as well as the limiting path 
$\{v(t)\}_{t \in [0,T]}$ that is solution of the 
differential equation \eqref{eqn:limit:ODE}.

\begin{figure}[hh]
\centering
\includegraphics[width= 4in]{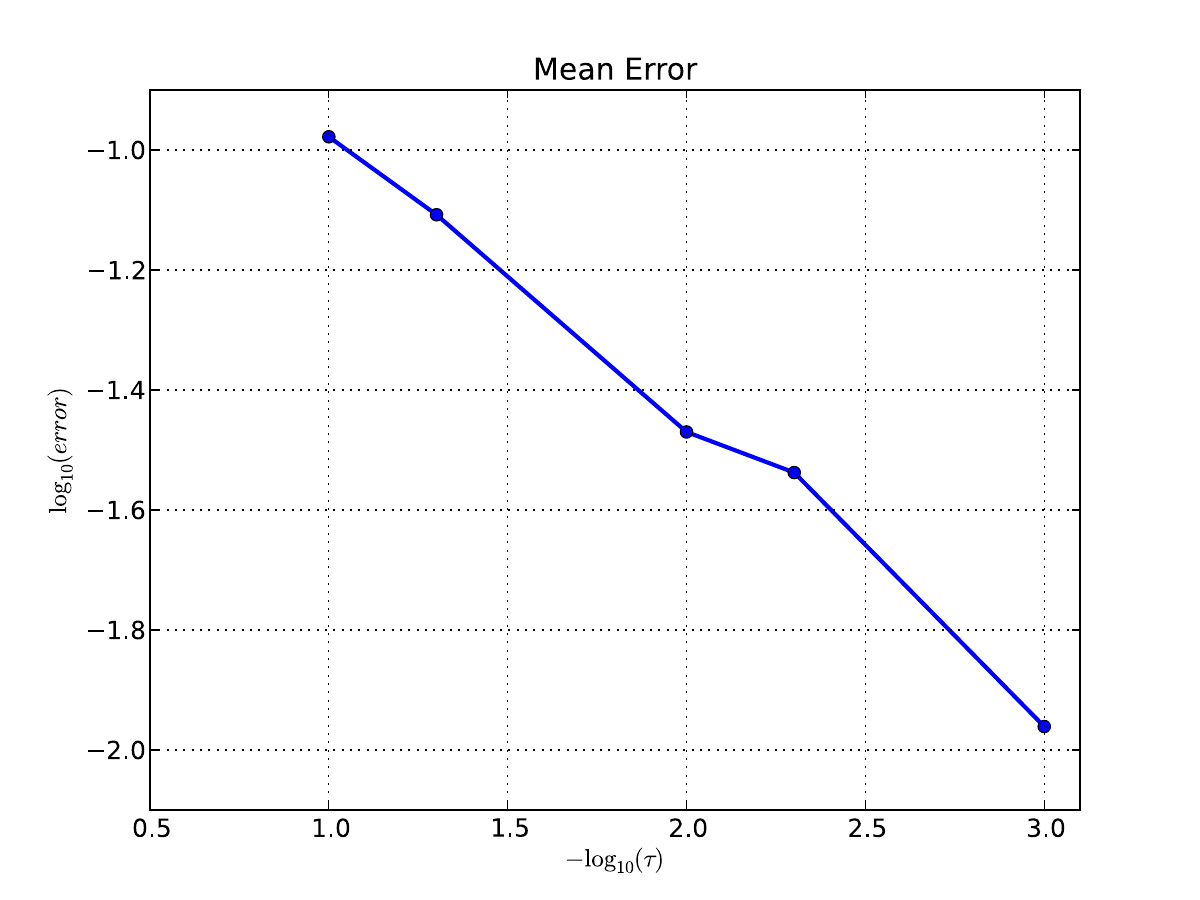}
\caption{ \label{fig.error_tau} }
Mean error $\EE\big[ \|x - (\text{minimizer})\|_2\big]$ as a function of the
temperature $\tau$.
 \end{figure}

\begin{figure}[htb]
\centering
\includegraphics[width= 4in]{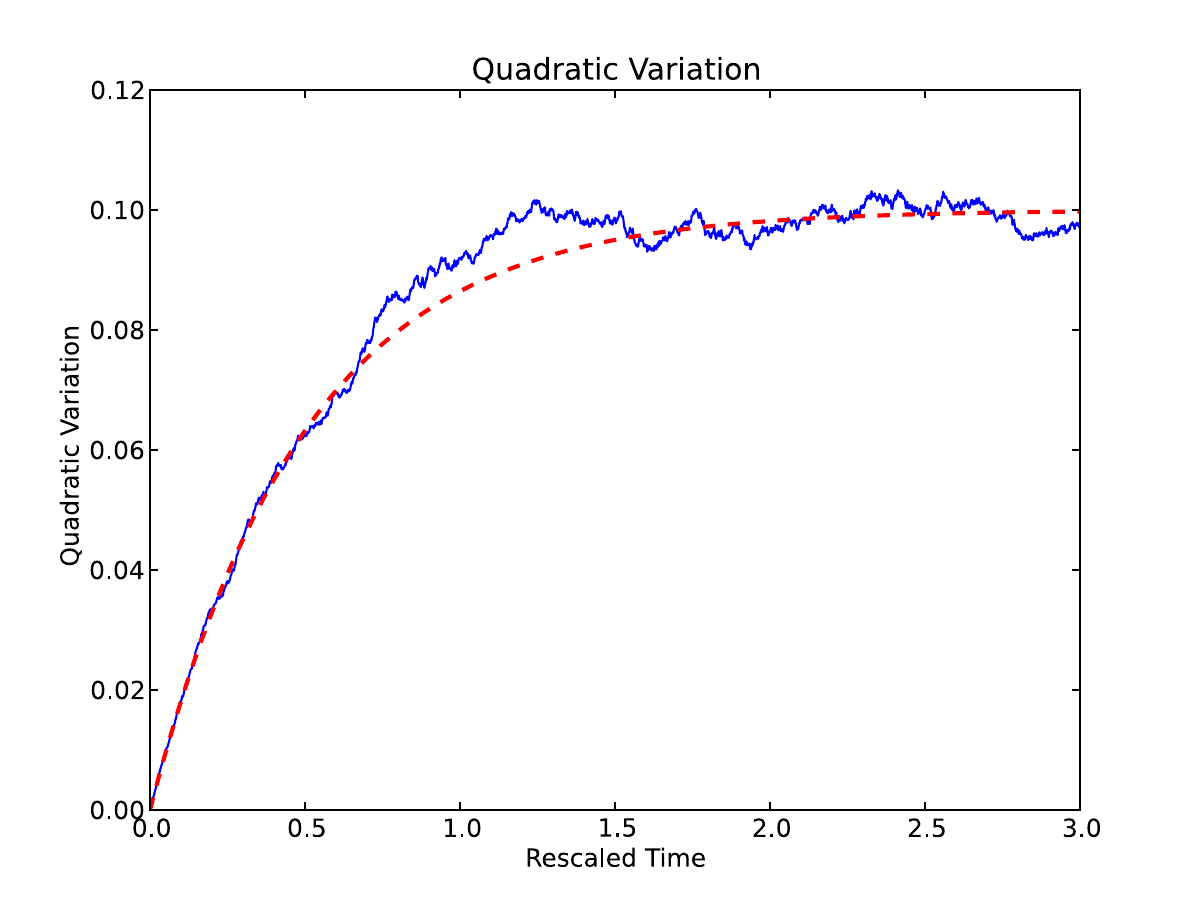}
\caption{ \label{fig.qv} 
pCN parameters: $\lambda=2 \,\pi^2$, $\tau = 1.10^{-1}$, $\delta = 1.10^{-3}$
and 
the algorithm starts at $x^{k,\delta}=0$. The rescaled quadratic variation
process (full 
line) behaves as the solution of the differential equation (dotted line), as
predicted 
by Theorem \ref{th.fluid.limit}. The quadratic variation converges to $\tau$, as
described 
by Proposition \ref{prop.limiting.qv}.
}
 \end{figure}
 
\section{Conclusion}
\label{sec:7}

There are two useful perspectives on the material 
contained in this paper, one concerning optimization
and one concerning statistics. We now detail these
perspectives.

\begin{itemize}

\item {\bf Optimization} We have demonstrated a class of
algorithms to minimize the functional $J$ given by
\eqref{eq:I}. The Assumptions \ref{ass:1} encode the
intuition that the quadratic part of $J$ dominates. Under
these assumptions we study the properties of an
algorithm which requires only the evaluation of $\Psi$
and the ability to draw samples from Gaussian measures
with Cameron-Martin norm given by the quadratic part of $J$.
We demonstrate that, in a certain parameter limit,
the algorithm behaves like a noisy gradient flow for
the functional $J$ and that, furthermore, the size
of the noise can be controlled systematically.
The advantage of constructing algorithms
on Hilbert space is that they are robust to finite
dimensional approximation. We turn to this point in the
next bullet. 

\item {\bf Statistics} The algorithm that we use is
a Metropolis-Hastings method
with an Onrstein-Uhlenbeck proposal which we refer
to here as pCN, as in \cite{CRSW13}. The proposal takes the form for
$\xi \sim \Normal(0,C)$,
$$y=\bigl(1-2\delta\bigr)^{\frac12}x+\sqrt{2\delta\tau}\xi$$
given in \eqref{eq:prop}.  The proposal is constructed
in such a way that the algorithm is defined on infinite
dimensional Hilbert space and may be viewed as
a natural analogue of a random walk Metropolis-Hastings
method for measures defined via density with respect to
a Gaussian. It is instructive to contrast this with
the standard random walk method S-RWM with proposal 
$$y=x+\sqrt{2\delta\tau}\xi.$$
Although the proposal for S-RWM differs only through
a multiplicative factor in the systematic component,
and thus implementation of either is practically
identical, the S-RWM method is not defined on
infinite dimensional Hilbert space. 
This turns out to matter if we compare both
methods when applied in ${\mathbb R}^N$ for $N \gg 1$, as would
occur if approximating a problem in infinite
dimensional Hilbert space: in this setting
the S-RWM method requires the choice $\delta={\cal O}(N^{-1})$
to see the diffusion (SDE) limit \cite{mattingly2011spde}
and so requires ${\cal O}(N)$ steps to see ${\cal O}(1)$ 
decrease in the objective function, or to draw independent
samples from the target measure; in contrast the pCN 
produces a diffusion limit for $\delta \to 0$ independently
of $N$ and so requires ${\cal O}(1)$ steps to see
${\cal O}(1)$ 
decrease in the objective function, or to draw independent
samples from the target measure. Mathematically this 
last point is manifest in the fact that we may 
take the limit $N \to \infty$ (and
thus work on the infinite dimensional Hilbert space) followed
by the limit $\delta \to 0.$

\end{itemize}

The methods that we employ for the derivation of the
diffusion (SDE) limit use a combination of ideas from
numerical analysis and the weak convergence
of probability measures. This approach is encapsulated
in Lemma \ref{lem:diff_approx}
which is structured in such a way
that it, or variants of it,
may be used to prove diffusion limits for
a variety of problems other than the one considered here.

\vspace{0.1in}

\appendix

\section{Proofs of Lemmas; Section 4} \label{app:sec4calc}
\begin{proof}[Proof of Lemma \ref{lem.acc.prob.estim}]
Let us introduce the two $1$-Lipschitz functions $h,h_*:\RR \to \RR$ defined by
\begin{equs} \label{e.lip.h}
h(x) = 1 \wedge e^x \qquad \text{and} \qquad h_*(x) = 1 + x \, 1_{\{x < 0\}}.
\end{equs}
The function $h_*$ is a first order approximation of $h$ in a neighborhood of
zero and we have
\begin{equs}
\alpha^{\delta}(x,\xi) \;=\; h\Big( -\frac{1}{\tau}\{\Psi(y)-\Psi(x)\} \Big) 
\qquad \text{and} \qquad
\bar{\alpha}^{\delta}(x,\xi) \;=\; h_* \Big( -\sqrt{\frac{2\delta}{\tau}} \;
\bra{\nabla \Psi(x), \xi} \Big)
\end{equs}
where the proposal $y$ is a function of $x$ and $\xi$, as described in Equation
\eqref{eqn:RWMprop}.
Since $h_*(\cdot)$ is close to $h(\cdot)$ in a neighborhood of zero, 
the proof is finished once it is proved that
$-\frac{1}{\tau}\{\Psi(y)-\Psi(x)\}$ is close to 
$-\sqrt{\frac{2\delta}{\tau}} \; \bra{\nabla \Psi(x), \xi}$.
We have $\EE_x\Big[ |\alpha^{\delta}(x,\xi) - \bar{\alpha}^{\delta}(x,\xi)|^p
\Big] \;\lesssim\; A_1 + A_2$
where the quantities $A_1$ and $A_2$ are given by
\begin{equs}
A_1 &= \EE_x\Big[ \big| h \big(-\frac{1}{\tau}\{\Psi(y)-\Psi(x)\} \big) 
\;-\; h \big(-\sqrt{\frac{2\delta}{\tau}} \; \bra{\nabla \Psi(x), \xi} \big)
\big|^p \Big] \\
A_2 &= \EE_x\Big[ \big| h\big(-\sqrt{\frac{2\delta}{\tau}} \; \bra{\nabla
\Psi(x), \xi} \big) 
\;-\; h_* \big(- \sqrt{\frac{2\delta}{\tau}} \; \bra{\nabla \Psi(x), \xi} \big)
\big|^p \Big] .
\end{equs}
By Lemma \ref{lem:lipshitz+taylor}, the first order Taylor approximation of
$\Psi$ is 
controlled, 
$\big| \Psi(y)-\Psi(x) - \bra{\nabla \Psi(x),y-x} \big| \lesssim \|y-x\|_s^2$. 
The definition of the proposal $y$ given in Equation \eqref{eqn:RWMprop}  shows
that 
$\|(y-x)-\sqrt{2\delta \tau} \xi\|_s \lesssim \delta \|x\|_s$.
Assumptions \ref{ass:1} state that for $z \in \h^s$ we have $\bra{\nabla
\Psi(x),z} \lesssim \big(1+\|x\|_s \big) \cdot \|z\|_s$. 
Since the function $h(\cdot)$ is $1$-Lipschitz it follows that
\begin{equs} \label{e.estim.A}
A_1 
&= \EE_x\Big[ \big|h\big(-\frac{1}{\tau}\{\Psi(y)-\Psi(x)\} \big) -
h\big(-\sqrt{\frac{2\delta}{\tau}} \; \bra{\nabla \Psi(x), \xi} \big) \big|^p
\Big] \\
&\lesssim \EE_x\Big[ \big| \Psi(y)-\Psi(x)-\bra{\nabla \Psi(x),y-x} \big|^p 
\;+\; \big| \bra{\nabla \Psi(x),y-x- \sqrt{2\delta \tau} \xi} \big|^p \Big] \\
&\lesssim \EE_x\Big[ \|y-x\|_s^{2p} 
\;+\; (1+\|x\|_s^p) \cdot (\delta \; \|x\|_s)^p \Big] 
\;\lesssim\; \delta^{p} \, (1+\|x\|_s^{2p}).
\end{equs}
Lemma \ref{lem:size:prop} has been used to control the size of $\EE_x \big[
\|y-x\|^p \big]$.
To bound $A_2$, notice that for $z \in \RR$ we have $|h(z)-h_*(z)| \leq \frac12
\, z^2$. 
Therefore the quantity $A_2$ can be bounded by
\begin{equs} \label{e.estim.B}
A_2 
&\lesssim \EE_x\Big[ |\sqrt{\delta} \; \bra{\nabla \Psi(x), \xi}|^{2p}\Big]
\;\lesssim\; \delta^{p} \; \EE_x\Big[ (1+\|x\|_s^{2p}) \, \|\xi\|_s^{2p} \Big] 
\;\lesssim\; \delta^{p} \; (1+\|x\|_s^{2p}).
\end{equs}
Estimates \eqref{e.estim.A} and \eqref{e.estim.B} together 
give Equation \eqref{e.acc.asymp}.
\end{proof}

\begin{proof}[Proof of Corollary \ref{cor.alpha.bar}]
Let us prove Equations \eqref{e.bound.alpha.apha.bar.1} and
\eqref{e.bound.alpha.apha.bar.2}.
\begin{itemize}
\item 
Lemma \ref{lem.acc.prob.estim} and Jensen's inequality give 
Equation \eqref{e.bound.alpha.apha.bar.1}.

\item 
To prove \eqref{e.bound.alpha.apha.bar.2}, one can suppose $\delta^{\frac{p}{2}}
\|x\|^p_s \leq 1$. Indeed,
if $\delta^{\frac{p}{2}} \|x\|^p_s \geq 1$, we have
\begin{equs}
\EE_x \Big[ \big|\alpha^{\delta}(x,\xi)  \;-\;  1 \big|^p \Big] \;\lesssim\; 1 
\;\leq\; \delta^{\frac{p}{2}} \|x\|^p_s 
\;\leq\; \delta^{\frac{p}{2}} \; (1 + \|x\|^p_s),
\end{equs}
which gives the result.
We thus suppose from now on that $\delta^{\frac{p}{2}} \|x\|_s \leq 1$.
Under Assumptions \ref{ass:1} we have $\|\nabla \Psi(x)\|_{-s} \lesssim
1+\|x\|_s$.
Lemma \ref{lem:lipshitz+taylor} shows that for all $x,y \in \h^s$ we have
$\big|\Psi(y)-\Psi(x) - \bra{\nabla \Psi(x), y-x } \big| \lesssim \|y-x\|_s^2$. 
The function $h(x) = 1 \wedge e^x$ is $1$-Lipschitz, 
$\alpha^{\delta}(x,\xi) = h\big(-\frac{1}{\tau}[\Psi(y)-\Psi(x)] \big)$ and
$h(0)=1$.
Consequently,
\begin{equs}
\EE_x \Big[ \big|\alpha^{\delta}(x,\xi)  \;-\;  1 \big|^p \Big]
&=\EE_x \Big[ \big| h\big(-\frac{1}{\tau}[\Psi(y)-\Psi(x)] \big) \;-\; h(0)
\big|^p \Big]\\
&\lesssim \EE_x\big[|\Psi(y)-\Psi(x)|^p \big]
\;\lesssim\; \EE_x\big[ |\bra{\nabla \Psi(x), y-x }|^p + \|y-x\|_s^{2p} \big]\\
&\lesssim (1+\|x\|_s^p) \cdot \EE_x\big[ \|y-x\|^p_s] + \EE_x
\big[\|y-x\|_s^{2p} \big].
\end{equs}
By Lemma \ref{lem:size:prop}, for any integer $\beta \geq 1$ we have 
$\EE_x\big[ \|y-x\|^{\beta}_s \big] \lesssim \delta^{\beta} \|x\|_s^{\beta} +
\delta^{\frac{\beta}{2}}$
so that the assumption $\delta^{\frac{p}{2}} \|x\|^p_s \leq 1$ leads to
\begin{equs}
\EE_x \Big[ \big|\alpha^{\delta}(x)  \;-\;  1 \big|^p \Big]
&\lesssim (1+\|x\|_s^p) \cdot (\delta^{p} \|x\|_s^{p} + \delta^{\frac{p}{2}}) +
(\delta^{2p} \|x\|_s^{2p} + \delta^{p})\\
&\lesssim (1+\|x\|_s^p) \cdot (\delta^{\frac{p}{2}} + \delta^{\frac{p}{2}}) +
(\delta^{p} + \delta^{p})\\
&\lesssim \delta^{\frac{p}{2}} \; (1+\|x\|^p_s).
\end{equs}
This finishes the proof of Corollary \ref{cor.alpha.bar}.
\end{itemize}
\end{proof}

\begin{proof}[Proof of Lemma \ref{lem.noise.estim}]
The martingale difference $\Gamma^{\delta}(x,\xi)$ defined in Equation
\eqref{eqn:approx:drift:noise}
can also be expressed as
\begin{equs}
\Gamma^{\delta}(x,\xi) = \xi + F(x,\xi)
\end{equs}
where the error term $F(x,\xi) = F_1(x,\xi) + F_2(x,\xi)$ is given by
\begin{equs}
F_1(x,\xi) &= (2 \tau \delta)^{-\frac{1}{2}} \big( (1 -2 \delta)^{\frac12} - 1
\big) \, \big( \gamma^{\delta}(x,\xi) 
\;-\; \EE_x[\gamma^{\delta}(x,\xi)]\big) x\\
F_2(x,\xi) &= \big( \gamma^{\delta}(x,\xi) - 1\big) \cdot \xi 
\;-\; \EE_x\big[ \gamma^{\delta}(x,\xi) \cdot \xi \big].
\end{equs}
We now prove that the quantity $F(x,\xi)$ satisfies
\begin{equs} \label{e.bound.FF}
\EE_x \Big[\|F(x,\xi)\|_s^2 \Big] \lesssim \delta^{\frac14} \; (1+\|x\|_s^2)
\end{equs}

\begin{itemize}
\item 
We have $\delta^{-\frac{1}{2}} \big( (1 - 2\delta)^{\frac12} - 1 \big) \lesssim
\delta^{\frac12}$ 
and $|\gamma^{\delta}(x,\xi)| \leq 1$. Consequently,
\begin{equs} \label{e.bound.FF.1}
\EE_x\Big[ \|F_1(x,\xi)\|_s^{2} \Big] \lesssim \delta \; \|x\|^{2}_s
\end{equs}

\item Let us now prove that $F_2$ satisfies 
\begin{equs} \label{e.bound.FF.2}
\EE_x\Big[ \|F_2(x,\xi)\|_s^{2} \Big] \lesssim\delta^{\frac14} \;
(1+\|x\|^{\frac12}).
\end{equs}
To this end, use the decomposition
\begin{equs}
\EE_x\Big[ \|F_2(x,\xi)\|_s^{2} \Big]
&\lesssim \EE_x \Big[ |\gamma^{\delta}(x,\xi) - 1|^{2} \cdot \|\xi\|_s^{2} \Big]
\;+\; \|\EE_x\big[ \gamma^{\delta}(x,\xi) \cdot \xi \big]\|_s^{2} \\
&= I_1  \;+\; I_2.
\end{equs}
The Cauchy-Schwarz inequality shows that
$I_1 \lesssim \EE_x \Big[ |\gamma^{\delta}(x,\xi) - 1|^{4} \Big]^{\frac12}$
where the Bernoulli 
random variable $\gamma^{\delta}(x,\xi)$ can be expressed as
$\gamma^{\delta}(x,\xi) = \indic_{\{ U < \alpha^{\delta}(x,\xi)\}}$ where $U
\dist \text{Uniform}(0,1)$ is
independent from any other source of randomness. 
Consequently 
\begin{equs}
\EE_x \Big[ |\gamma^{\delta}(x,\xi) - 1|^{4} \Big]
\;=\; \EE_x\big[ \indic_{\{\gamma^{\delta}(x,\xi) = 0\}} \; \big]
\;=\; 1 - \alpha^{\delta}(x)
\end{equs}
where the mean local acceptance probability $\alpha^{\delta}(x)$ is defined by 
$\alpha^{\delta}(x) = \EE_x[\alpha^{\delta}(x,\xi)] \in [0,1]$.
The convexity of the function $x \to |1-x|$ ensures that 
\begin{equs}
\big|1 - \alpha^{\delta}(x) \big|
\;=\; \big|1 - \EE_x\big[ \alpha^{\delta}(x,\xi)\big] \big|
\;\leq\; \EE_x\big[ \big|1 - \alpha^{\delta}(x,\xi) \big|\big] 
\;\lesssim\; \delta^{\frac12} \; (1+\|x\|)
\end{equs}
where the last inequality follows from Corollary \ref{cor.alpha.bar}.
This proves that $I_1 \lesssim \delta^{\frac14} \; (1+\|x\|^{\frac12})$.
To bound $I_2$, it suffices to notice
\begin{equs}
I_2 
&=\; \|\EE_x\big[ \gamma^{\delta}(x,\xi) \cdot \xi \big]\|_s^{2}
\;=\; \|\EE_x\big[ \big(\gamma^{\delta}(x,\xi)-1\big) \cdot \xi \big]\|_s^{2} \\
&\lesssim  \EE_x \Big[ |\gamma^{\delta}(x,\xi) - 1|^{2} \cdot \|\xi\|_s^{2}
\Big] 
\;=\; I_1
\end{equs}
so that $I_2 \lesssim I_1 \lesssim \delta^{\frac14} \; (1+\|x\|^{\frac12})$ and 
$\EE_x \Big[\|F_2(x,\xi)\|_s^{2} \Big] \lesssim \delta^{\frac14} \;
(1+\|x\|^{\frac12})$.
\end{itemize}
\noindent
Combining Equation \eqref{e.bound.FF.1} and \eqref{e.bound.FF.2} gives Equation
\eqref{e.bound.FF}.\\

\noindent
Let us now describe how Equations \eqref{e.noise.approx.3} and 
\eqref{e.noise.approx.4} follow from the estimate \eqref{e.bound.FF}.
\begin{itemize}
\item We have $\EE[\bra{\pphi_i, \xi}_s \bra{\pphi_j, \xi}_s ] = \bra{\pphi_i,
C_s \; \pphi_j}_s$ and
$\EE_x[\bra{\pphi_i, \Gamma^{\delta}(x,\xi)}_s \bra{\pphi_j,
\Gamma^{\delta}(x,\xi)}_s ] = \bra{\pphi_i, D^{\delta}(x) \; \pphi_j}_s$
with $\Gamma^{\delta}(x,\xi) = \xi + F(x,\xi)$. Consequently,
\begin{equs}
\bra{\pphi_i, D^{\delta}(x) \; \pphi_j}_s \;-\; \bra{\pphi_i,C_s \; \pphi_j}_s
&= \EE_x[\bra{\pphi_i, F(x,\xi)}_s \bra{\pphi_j, F(x,\xi)}_s] \\
&+\EE_x[\bra{\pphi_i, \xi}_s \bra{\pphi_j, F(x,\xi)}_s]\\
&+\EE_x[\bra{\pphi_i, F(x,\xi)}_s \bra{\pphi_j, \xi}_s]. 
\end{equs}
We have $|\bra{\pphi_i, F(x,\xi)}_s| \leq \|F(x,\xi)\|_s$ and the Cauchy-Schwarz 
inequality proves that
\begin{equs}
\EE_x[\bra{\pphi_i, F(x,\xi)}_s \bra{\pphi_j, \xi}_s]^2 
&\leq \EE_x[ \|F(x,\xi)\|_s \, \|\xi\|_s]^2\\
&\lesssim \EE_x[ \|F(x,\xi)\|^2_s ].
\end{equs}
It thus follows from Equation \eqref{e.bound.FF} that
\begin{equs}
| \bra{\pphi_i, D^{\delta}(x) \; \pphi_j}_s \;-\; \bra{\pphi_i,C_s \; \pphi_j}_s
|
&\lesssim \EE_x \big[ \|F(x,\xi)\|_s^2 \big] + \EE_x \big[ \|F(x,\xi)\|_s^2
\big]^{\frac12} \\
&\lesssim \delta^{\frac18} \, (1+\|x\|_s),
\end{equs}
finishing the proof of \eqref{e.noise.approx.1}.

\item We have $\tr_{\h^s}(C_s) = \EE[\|\xi\|_s^2]$ and
$\tr_{\h^s}(D^{\delta}(x)) = \EE[\|\Gamma^{\delta}(x,\xi)\|_s^2]$. Estimate
\eqref{e.bound.FF} thus shows that
\begin{equs}
|\tr_{\h^s}\big( D^{\delta}(x) \big) & \;-\; \tr_{\h^s}\big( C_s \big) |
= \big| \EE[ \|\Gamma^{\delta}(x,\xi)\|_s^2 - \|\xi\|_s^2] \big| \\
&\;=\; \big| \EE[ \|\xi \;+\; F(x,\xi) \|_s^2 - \|\xi\|_s^2] \big| \\
&\lesssim \big| \EE[ \bra{2 \xi \;+\; F(x,\xi),  F(x,\xi)}_s \big|
\;\lesssim\;  \EE[ \|2\xi \;+\; F(x,\xi)\|_s \; \|F(x,\xi)\|_s  ] \\
&\lesssim  \EE[ 4\|\xi\|_s^2  + \|F(x,\xi)\|^2_s ]^{\frac12} \cdot \EE[
\|F(x,\xi)\|_s^2 ]^{\frac12}\\
&\lesssim \Big( 1 + \delta^{\frac14} \; (1+\|x\|_s^2) \Big)^{\frac12} \cdot
\Big( \delta^{\frac18} \; (1+\|x\|_s)) \Big)
\;\lesssim\; \delta^{\frac18} (1+\|x\|_s^2),
\end{equs}
finishing the proof of \eqref{e.noise.approx.4}.
\end{itemize}
\end{proof}

\section{Proof of Lemma \ref{lem.accept.moves}} \label{app:accmoves}
Before proceeding to give the proof, let us give a brief proof sketch. The proof
of Lemma \ref{lem.accept.moves} consists in showing first that 
for any $\eps > 0$
one can find a ball of radius $R(\eps)$ around $0$ in $\h^s$,
\begin{equs}
B_0(R(\eps)) = \big\{x \in \h_s : \|x\|_s \leq R(\eps) \big\},
\end{equs}
such that with probability $1-2\eps$ we have $x^{k,\delta} \in B_0(R(\eps))$ 
and $y^{k,\delta} \in B_0(R(\eps))$ for all $0 \leq k \leq T\delta^{-1}$. 
As is described below, the existence of such a ball follows from the bound
\begin{equs} \label{e.max.bound}
\EE[\sup_{t \in [0,T]} \, \| x(t) \|_s \, ] \; < \; +\infty
\end{equs}
where $t \mapsto x(t)$ is the solution of the stochastic differential equation 
\eqref{eqn:limit:SPDE0}. For the sake of completeness, we include 
a proof of Equation \eqref{e.max.bound}.
The solution $t \mapsto x(t)$ of the stochastic differential equation
\eqref{eqn:limit:SPDE0} 
satisfies $x(t) = \int_0^t d\bigl(x(u)\bigr) \, du + \sqrt{2 \tau} \, W(t)$ 
for all $t \in [0,T]$ where the drift function $d(x) = -\big( x + C \nabla
\Psi(x) \big)$ 
is globally Lipschitz on $\h^s$, as described in Lemma
\ref{lem:lipshitz+taylor}. 
Consequently $\|d(x)\|_s \leq A(1+\|x\|_s)$ for some positive constant $A>0$. 
The triangle inequality then shows that
\begin{equs}
\|x(t)\|_s 
\leq A \int_0^t \big(1+\|x(u)\|_s \big) \, du + \sqrt{2 \tau} \| W(t) \|_s.
\end{equs}
By Gronwall's inequality we obtain
\begin{equs}
\sup_{[0,T]}\|x(t)\|_{s}\leq (A \, T + \sup_{[0,T]} \|W(t)\|_s) \, \big[ 1 + A
\, Te^{A \, T} \big].
\end{equs}
Since $\EE[ \sup_{[0,T]} \, \|W(t)\|_s] < \infty$, the bound 
\eqref{e.max.bound} is proved.

\begin{proof}[Proof of Lemma \ref{lem.accept.moves}]
The proof consists in showing that the the acceptance probability of the
algorithm 
is sufficiently close to $1$ so that approximation $t^{\delta}(k) \approx k$
holds.
The argument can be divided into $3$ main steps. 
In the first part, 
we show that we can find a finite ball $B(0,R(\eps))$ such that the trajectory
of the Markov chain 
$\{x^{k,\delta}\}_{k \leq T \delta^{-1}}$ remains in this ball with probability
at least $1-2\eps$. 
This observation is useful since the function $\Psi$ is Lipschitz on any ball of
finite radius in $\h^s$.
In the second part, using the fact that $\Psi$ is Lipschitz on $B(0,R(\eps))$,
we find 
a lower bound for the acceptance probability $\alpha^{\delta}$. 
Then, in the last step, we use a moment estimate to prove that one can make 
the lower bound uniform on the interval $0 \leq k \leq T \delta^{-1}$.

\begin{itemize}

\item {\bf Restriction to a Ball of Finite Radius}\\
First, we show that with high probability the trajectory 
of the MCMC algorithm stays in a ball of finite radius. The functional 
$x \mapsto \sup_{t \in [0,T]} \|x(t)\|_s$ is continuous on $C([0,T], \h_s)$ and 
$\EE\big[ \sup_{t \in [0,T]} \|x(t)\|_s \big] < \infty$ for $t \mapsto x(t)$
following the 
stochastic differential equation \eqref{eqn:limit:SPDE0}, as proved 
in Equation \eqref{e.max.bound}. 
Consequently, the weak convergence 
of $z^{\delta}$ to the solution 
of \eqref{eqn:limit:SPDE0} encapsulated in Theorem
\ref{thm:main} shows that 
$\EE\big[ \sup_{k < T \delta^{-1}} \|x^{k,\delta}\|_s \big]$ can be bounded by
a 
finite universal constant independent from $\delta$.
Given $\eps > 0$, Markov inequality thus shows that one can find a radius
$R_1=R_1(\eps)$ 
large enough so that the inequality
\begin{equs} \label{e.bound.x_k}
\PP \big[ \| x^{k,\delta} \|_s < R_1 \quad \text{for all} \quad 0 \leq k \leq T
\delta^{-1} \big] > 1-\eps 
\end{equs}
for any $\delta \in (0,\frac12)$.
By Fernique's Theorem there exists $\alpha>0$ such that
$\EE[e^{\alpha \|\xi\|_s^2}] < \infty$. This implies that 
$\PP[\| \xi \|_s > r] \lesssim e^{-\alpha r^2}$. Therefore, if $\{\xi_k\}_{k
\geq 0}$ are 
$\iid$ Gaussian random variables distributed as $\xi \dist \pi_0$,
the union bound shows that
\begin{equs}
\PP \big[ \| \sqrt{\delta} \xi_k \|_s \leq r  \quad \text{for all} \quad 0 \leq
k \leq T \delta^{-1}  \big] 
\gtrsim 1 -  T \delta^{-1} \exp(-\alpha \delta^{-1} r^2).
\end{equs}
This proves that one can choose $R_2 = R_2(\eps)$ large enough 
in such a manner that
\begin{equs} \label{e.bound.xi}
\PP \big[ \| \sqrt{\delta} \xi_k \|_s < R_2 \quad 
\text{for all} \quad 0 \leq k \leq T \delta^{-1} \big] > 1-\eps 
\end{equs}
for any $\delta \in (0,\frac12)$.
At temperature $\tau > 0$ the MCMC proposals are given by 
$y^{k,\delta} = (1-2 \delta)^{\frac12} x^{k,\delta} + (2\delta \tau)^{\frac12}
\, \xi_k$. 
It thus follows from
the bounds \eqref{e.bound.x_k} and \eqref{e.bound.xi}  that 
with probability at least $(1-2\eps)$ the vectors $x^{k,\delta}$ and
$y^{k,\delta}$ 
belong to the ball $B_0(R(\eps)) = \{ x \in \h_s : \|x\|_s < R(\eps)\}$ 
for $0 \leq k \leq T \delta^{-1}$
where radius $R(\eps)$ is given by $R(\eps) = R_1(\eps)+R_2(\eps)$.

\item {\bf Lower Bound for Acceptance Probability}\\
We now give a lower bound for the acceptance probability
$\alpha^{\delta}(x^{k,\delta}, \xi^k)$ 
that the move $x^{k,\delta} \to y^{k,\delta}$ is accepted.
Assumptions \ref{ass:1} state that $\| \nabla \Psi(x) \|_{-s} \lesssim 1 +
\|x\|_s$.
Therefore, the function $\Psi: \h^s \to \RR$ is Lipschitz on $B_0(R(\eps))$,
\begin{equs}
\|\Psi\|_{\lip,\eps} \; \eqdef \; \sup \Big\{ \frac{|\Psi(y) -
\Psi(x)|}{\|y-x\|_s} 
\,:\, x,y \in B_0(R(\eps))\Big\} < \infty.
\end{equs}
One can thus bound the acceptance probability 
$\alpha^{\delta}(x^{k,\delta}, \xi^{k}) = 1 \wedge \exp\big(
-\tau^{-1}[\Psi(y^{k,\delta}) - \Psi(y^{k,\delta})]\big)$ for $x^{k,\delta},
y^{k,\delta} \in B_0(R(\eps))$. Since the function 
$z \mapsto 1 \wedge e^{-\tau^{-1}z}$ is Lipschitz with constant $\tau^{-1}$, the
definition 
of $\|\Psi\|_{\lip,\eps}$ shows that the bound 
\begin{equs}
1-\alpha^{\delta}(x^{k,\delta}, \xi^{k}) 
&\leq \tau^{-1} \, \| \Psi \|_{\lip,\eps} \, \|y^{k,\delta} - x^{k,\delta}\|_s
\\
&\leq \tau^{-1} \, \| \Psi \|_{\lip,\eps} \, \Big\{ [(1-2\delta)^{\frac12} -
1]\, \|x^{k,\delta}\|_s 
+ (2\delta \tau)^{\frac12} \, \|\xi^k\|\Big\}\\
&\lesssim \sqrt{\delta} \, (1 +  \|\xi^k\|_s) 
\end{equs}
holds for every $x^{k,\delta}, y^{k,\delta} \in B_0(R(\eps))$. Hence, 
there exists a constant $K=K(\eps)$ such that 
$\what{\alpha}^{\delta}(\xi^k) = 1 - K  \sqrt{\delta} \, (1 +  \|\xi^k\|_s)$ 
satisfies $\alpha^{\delta}(x^{k,\delta}, \xi^{k}) >
\what{\alpha}^{\delta}(\xi^k)$ 
for every $x^{k,\delta}, y^{k,\delta} \in B_0(R(\eps))$.
Since the trajectory 
of the MCMC algorithm stays in the ball $B_0(R(\eps))$ with probability at
least 
$1-2\eps$ the inequality
\begin{equs}
\PP[\alpha^{\delta}(x^{k,\delta}, \xi^{k}) > \what{\alpha}^{\delta}(\xi^k)
 \quad \text{for all} \quad 0 \leq k \leq T \delta^{-1} ] \;>\; 1 - 2 \eps.
 \end{equs}
holds for every $\delta \in (0,\frac12)$.

\item {\bf Second Moment Method}\\
To prove that $t^{\delta}(k)$ does not deviate too much from $k$, we show 
that its expectation satisfies $\EE[t^{\delta}(k)] \approx k$ and we then 
control the error by bounding the variance. Since the Bernoulli random variable 
$\gamma^{k,\delta} = \textrm{Bernoulli}(\alpha^{\delta}(x^{k,\delta}\xi^k))$ are
not 
independent, the variance of $t^{\delta}(k) = \sum_{l \leq k} \gamma^{l,\delta}$
is 
not easily computable. We thus introduce $\iid$ auxiliary random variables 
$\what{\gamma}^{k,\delta}$ such that 
\begin{equs}
 \sum_{l \leq k} \what{\gamma}^{l,\delta} =  \what{t}^{\delta}(k) 
 \quad \approx \quad t^{\delta}(k) = \sum_{l \leq k} \gamma^{l,\delta}.
\end{equs}
As described below, the behaviour of $\what{t}^{\delta}(k)$ is readily
controlled since 
it is a sum of $\iid$ random variables. The proof then exploits the fact that
 $\what{t}^{\delta}(k)$ is a good approximation of $t^{\delta}(k)$.\\
 
 \noindent
The Bernoulli random variables $\gamma^{k,\delta}$ can be described as 
$\gamma^{k,\delta} = \indic\big( U_k < \alpha^{\delta}(x^{k,\delta}\xi^k)\big)$
where 
$\{U_k\}_{k \geq 0}$ are $\iid$ random variables uniformly distributed on
$(0,1)$. 
As a consequence, with probability at least $1-2 \eps$, the random variables 
$\what{\gamma}^{k,\delta} = \indic\big( U_k < \what{\alpha}^{\delta})$ satisfy
$\gamma^{k,\delta} \geq \what{\gamma}^{k,\delta}$
for all $0 \leq k \leq T \delta^{-1}$. 
Therefore, with probability at least $1-2\eps$, we have 
$t^{\delta(k)} \geq \what{t}^{\delta(k)}$ for all $0 \leq k \leq T \delta^{-1}$
where
$\what{t}^{\delta(k)} = \sum_{l \leq k} \what{\gamma}^{l, \delta}$. 
Consequently, since $t^{\delta(k)} \leq k$, to prove Lemma
\ref{lem.accept.moves} it suffices to show instead that the following limit in
probability holds, 
\begin{equs} \label{e.lim.prob.modified}
\lim_{\delta \to 0} \quad \sup \big\{ \delta \cdot \big| \what{t}^{\delta}(k) -k
\big|  \; : 0 \leq k \leq T \delta^{-1} \big\} = 0.
\end{equs}
Contrary to the random variables $\{ \gamma^{k, \delta} \}_{k \geq 0}$, the
random variables 
$\{ \what{\gamma}^{k, \delta} \}_{k \geq 0}$ are $\iid$ and are thus easily
controlled. 
By Doob's inequality we have
\begin{equs}
\PP \Big[ \sup \big\{ \delta \cdot \big| \what{t}^{\delta}(k) -
\EE[\what{t}^{\delta}(k)] \big|  \,:\, 
0 \leq k \leq T \delta^{-1} \big\} \,>\,\eta \Big] 
\;\leq\; 2 \, \frac{\textrm{Var}\big( \what{t}^{\delta}(T\delta^{-1})
\big)}{(\delta^{-1} \eta)^2} 
\;\leq\; 2 \, \frac{\delta T}{\eta^2}.
\end{equs}
Since $\EE[\what{t}^{\delta}(k)] = k \cdot \big\{ 1 - K  \sqrt{\delta} \, (1 + 
\EE[\|\xi^k\|_s]) \big\}$,  
Equation \eqref{e.lim.prob.modified} follows. This finishes the proof of Lemma 
\ref{lem.accept.moves}.
\end{itemize}
\end{proof}

%
%
\begin{acknowledgements}
The authors thank an anonymous referee for constructive comments. We are grateful to David Dunson for
the his comments on the implications of theory, Frank Pinski 
for helpful discussions concerning the behaviour of the
quadratic variation; these discussions crystallized the need
to prove Theorem \ref{th.fluid.limit}. NSP gratefully acknowledges
the NSF grant DMS 1107070. AMS is grateful 
to EPSRC and ERC for financial support. Parts of this work
was done when AHT was visiting the department of Statistics at 
Harvard university. The authors thank the department of statistics, Harvard
University
for its hospitality.
\end{acknowledgements}

\bibliographystyle{spmpsci}      
\bibliography{mcmcgrad}   

%
%

\end{document}